\newtheorem{thm}{Theorem}[section]
\newtheorem{prop}[thm]{Proposition}
\newtheorem{lemma}[thm]{Lemma}
\theoremstyle{remark}
\theoremstyle{definition}
\newtheorem{defn}[thm]{Definition}
\renewcommand{\bar}{\overline}
\newcommand{\uG}{\underline{G}}
\newcommand{\uGsc}{\underline{G}^\mathrm{sc}}
\newcommand{\Irr}{\mathrm{Irr}\,}
\newcommand{\SO}{\mathrm{SO}}
\renewcommand{\O}{{\mathcal{O}}}
\newcommand{\C}{{\mathbb{C}}}
\newcommand{\F}{{\mathbb{F}}}
\newcommand{\tr}{\mathrm{tr}}
\newcommand{\ord}{\mathrm{ord}}
\newcommand{\Aut}{\mathrm{Aut}}
\newcommand{\GL}{\mathrm{GL}}
\newcommand{\SL}{\mathrm{SL}}
\newcommand{\SU}{\mathrm{SU}}
\newcommand{\PSL}{\mathrm{PSL}}
\newcommand{\Sp}{\mathrm{Sp}}
\newcommand{\Spec}{\mathrm{Spec}}
\newcommand{\Fact}{\mathrm{Fact\:}}
\newcommand{\U}{\mathcal U}
\renewcommand{\L}{\mathcal L}
\newcommand{\IL}{\mathcal IL}
\newcommand{\IU}{\mathcal IU}
\newcommand{\IO}{\mathcal IO}
\newcommand{\Alt}{{\raise 2pt\hbox{$\scriptstyle\bigwedge$}}}
\newcommand{\go}{\rightarrow}
\newcommand{\e}{\epsilon}
\newcommand{\x}{\chi}
\newcommand{\z}{\zeta}
\begin{document}
\title[On the Distribution of Values of Certain Word Maps]
{On the Distribution of Values of Certain Word Maps}

\author{Michael Larsen}
\email{mjlarsen@indiana.edu}
\address{Department of Mathematics\\
    Indiana University \\
    Bloomington, IN 47405\\
    U.S.A.}

\author{Aner Shalev}
\email{shalev@math.huji.ac.il}
\address{Einstein Institute of Mathematics\\
    Hebrew University \\
    Givat Ram, Jerusalem 91904\\
    Israel}

\thanks{Michael Larsen was partially supported by NSF Grant DMS-1101424.
Aner Shalev was partially supported by ERC Advanced Grant 247034. 
Both authors were partially supported 
by Bi-National Science Foundation United States-Israel Grant 2008194.}

\begin{abstract}
We prove that, for any positive integers $m, n$, the word map
$(x,y) \mapsto x^my^n$ is almost measure preserving on finite
simple groups. This extends the case $m=n=2$ obtained in 2009.
Along the way we obtain results of independent interest on fibers 
of word maps and on character values.
\end{abstract}

\maketitle

\section{Introduction}

Let $F_d$ be the free group of rank d, and let $w \in F_d$ be a word. 
Let $G$ be a finite group. Then $w$ induces
a word map $w_G: G^d \go G$, induced by substitution, 
which we sometimes denote by $w$. 
Word maps have been studied extensively in recent years.  See for instance \cite{LiSh1}, \cite{Sh2},
\cite{LaSh1}, \cite{LOST}, \cite{ScSh}, \cite{LaSh2}, \cite{LaShTi}, 
and the monograph \cite{Se} by Segal.

The word
map $w_G$ induces a probability distribution $P = P_{w,G}$ on $G$, where
for a subset $X \subseteq G$ we have $P(X) = |w_G^{-1}(X)|/|G|^d$.
Let $\mu = \mu_G$ be the uniform distribution on $G$. We shall be interested
in finding out how close $P$ is to the uniform distribution, in other words, how large 
the $L^1$ distance $\Vert P-\mu\Vert _1$ can be.

If $w$ is a primitive word, i.e., part of a free basis for $F_d$, then
$P_{w,G}$ is clearly uniform for every finite group $G$. Puder
and Parzanchevski proved in \cite{PP} that the converse holds too.
In fact if $P_{w,G}$ is uniform for infinitely many symmetric groups
$G = S_n$ then $w$ is primitive.

In this paper we are interested in cases where $P_{w,G}$ is 
{\it almost uniform}
as $G$ ranges over the family of finite (nonabelian) simple groups. 
This means that $\Vert P_{w,G} - \mu_G\Vert _1 \go 0$ as $G$ ranges over finite simple
groups and $|G| \go \infty$.  

The first result of this kind appears in \cite{GSh}, where it is
shown that the commutator map is almost uniform on finite simple groups.
In the same paper, it is also shown that the word map $x_1^2 x_2^2$ is almost
uniform on finite simple groups. Our main theorem generalizes this latter result
as follows.

\begin{thm} 
\label{main}
Let $m_1, m_2$ be positive integers and let
$w = x_1^{m_1} x_2^{m_2}$. Then $w$ is almost uniform on finite
simple groups.
\end{thm}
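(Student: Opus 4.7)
The plan is to attack the theorem through character-theoretic Fourier analysis on $G$. For $m\ge 1$, let $f_m(g)=|\{x\in G:x^m=g\}|$ and set $\nu_m(\chi)=\frac{1}{|G|}\sum_{x\in G}\chi(x^m)$, so $f_m=\sum_\chi\overline{\nu_m(\chi)}\,\chi$. Since $|w_G^{-1}(g)|=(f_{m_1}*f_{m_2})(g)$ and $\chi*\chi=\frac{|G|}{\chi(1)}\chi$ for irreducible $\chi$, we obtain
\[
P_{w,G}(g)-\mu_G(g)\;=\;\frac{1}{|G|}\sum_{\chi\neq 1}\frac{\overline{\nu_{m_1}(\chi)\nu_{m_2}(\chi)}}{\chi(1)}\,\chi(g),
\]
and Cauchy--Schwarz together with Parseval yield
\[
\|P_{w,G}-\mu_G\|_1^2\;\le\;\sum_{\chi\neq 1}\frac{|\nu_{m_1}(\chi)|^2\,|\nu_{m_2}(\chi)|^2}{\chi(1)^2}.
\]
Theorem~\ref{main} thus reduces to showing that this sum tends to zero as $|G|\to\infty$ through the finite simple groups.

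When $m_1=m_2=2$, the Frobenius--Schur bound $|\nu_2(\chi)|\le 1$ collapses the sum to $\sum_{\chi\neq 1}\chi(1)^{-2}$, which by the Liebeck--Shalev Witten-zeta asymptotic $\sum_{\chi\neq 1}\chi(1)^{-s}\to 0$ (valid for every $s>0$) is $o(1)$, recovering the result of \cite{GSh}. For general $m$ no such cheap bound on $\nu_m(\chi)$ is available, and the central technical task becomes establishing a power-saving character estimate
\[
|\nu_m(\chi)|\;\le\;C_m\,\chi(1)^{\alpha_m}
\]
with $\alpha_m$ small enough that $\alpha_{m_1}+\alpha_{m_2}<1$; then a second application of Witten zeta closes the argument. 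To prove such a bound I would rewrite $\nu_m(\chi)=\mathbb{E}_x[\chi(x^m)]$ and control this expectation by combining two ingredients: (i) the Liebeck--Shalev character-ratio bounds $|\chi(g)|/\chi(1)\le\chi(1)^{-\delta(g)}$, which give strong savings on ``generic'' conjugacy classes; and (ii) a structural statement that, for $x$ uniform in $G$, the power $x^m$ lies in such a generic class with high probability.

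The main obstacle is ingredient (ii): understanding the distribution of $x^m$ across conjugacy classes uniformly in the finite simple group $G$ and in $m$. This appears to be the ``result of independent interest on character values'' advertised in the abstract. I would attack it by case analysis --- for alternating groups, analysing the cycle type of $\sigma^m$ via symmetric-function statistics on random permutations; for classical and exceptional groups of Lie type, using the Deligne--Lusztig parametrisation of characters together with estimates on centralisers of $m$-th powers. Once (i) and (ii) are combined to yield the required exponent inequality $\alpha_{m_1}+\alpha_{m_2}<1$, the Witten-zeta asymptotic delivers the desired decay $\|P_{w,G}-\mu_G\|_1\to 0$ and Theorem~\ref{main} follows.
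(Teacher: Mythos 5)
Your Fourier setup is correct and your $L^1$-to-$L^2$ reduction is essentially the same device (Lemma~2.3 of \cite{GSh}) that the paper uses. You also correctly identify that the problem reduces to understanding how $x^m$ distributes over conjugacy classes, and Section~2--3 of the paper are indeed devoted to that. But the shape of the key estimate you propose is wrong, and cannot be established by the ingredients you list.

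The bound you want, $|\nu_m(\chi)|\le C_m\chi(1)^{\alpha_m}$ uniformly in $\chi$ with $\alpha_{m_1}+\alpha_{m_2}<1$, does not follow from ``(i) generic character ratios are small'' plus ``(ii) $x^m$ is usually generic.'' Write $\nu_m(\chi)=\mathbb{E}_x[\chi(x^m)]$ and split $x$ into those with $x^m$ generic and those with $x^m$ exceptional. The exceptional $x$ each contribute up to $\chi(1)$ to the expectation, so you would need the exceptional set to have density at most $\chi(1)^{\alpha_m-1}$ --- a quantity that is polynomially small in $|G|$ for large characters. What the fiber-control machinery (Proposition~\ref{few-factors}, Theorem~\ref{power-fibers}) actually gives is an exceptional set of density at most a fixed $\epsilon>0$ (or $o(1)$ after letting $\epsilon$ drift to zero with $|G|$), with fiber sizes bounded by $q^{O_\epsilon(\log^4 n)}$. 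This is nowhere near $\chi(1)^{\alpha-1}$: for a generic irreducible of $\SL_n(q)$ one has $\chi(1)^{\alpha-1}\sim q^{-cn^2}$. In short, a residual bad set of fixed positive density swamps the average, and your ingredient (ii) is far too weak to rule it out. (It is in fact unclear whether a power-saving bound on $\nu_m(\chi)$ holds at all for $m\ge 3$.)

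The paper sidesteps this by never averaging over all $x$ in the first place. It decomposes $\mu_{G\times G}$ as a convex combination of $\mu_{C_1\times C_2}$ over pairs of conjugacy classes, then discards the small set $S$ of bad classes \emph{before} applying the character expansion, absorbing them additively into the $L^1$ distance via the inequality
\[
\Vert \mu_G - f_*\mu_{G\times G}\Vert_1 \le \max_{C_1,C_2\subset G\setminus S}\Vert\mu_G - f_*\mu_{C_1\times C_2}\Vert_1 + \tfrac{4|S|}{|G|}.
\]
For the remaining good classes, $x_1^{m_1}$ lies in a single fixed class with small character values, so only the clean bound $|\chi(c_i^{m_i})|\le\chi(1)^{1/4}$ enters the sum $\sum_{\chi\ne 1}\chi(1)^{-1/4}$. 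This is qualitatively different from bounding the class function $\nu_m$: conditioning first means the exceptional classes never contribute a factor $\chi(1)$. The entire purpose of Theorem~\ref{power-fibers} and the polynomial counting in Section~2 is to show that the set of $c$ with $c^{m_i}$ ``bad'' is small, i.e., to make $|S|/|G|$ tend to zero; it is not designed to --- and cannot --- give the much finer density bound your averaging requires.

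Two further gaps: your Witten-zeta step requires $\sum_{\chi\ne 1}\chi(1)^{-s}\to 0$ for some $s<2$, which fails for groups of bounded rank; the paper runs a separate argument there (Proposition~\ref{bounded-rank}, using $|\chi(g^m)|=O(1)$ on regular semisimple elements), with a further special case for $\PSL_2(q)$. And the claim that for $m_1=m_2=2$ ``Frobenius--Schur collapses the sum'' suffices also implicitly relies on the rank-$\ge 2$ zeta decay, so even your warm-up case needs the same patch.
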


A particular case of this result, for $G = \PSL_2(q)$, was obtained
by Bandman and Garion in \cite{BG}. Theorem~\ref{main} for the case of
alternating groups $G = A_n$ follows from Theorem 1.18 in
\cite{LaSh1}. Hence it remains to deal with simple groups of Lie type.

Let us now outline the way Theorem~\ref{main} is proved.

In a large simple group $G$, there are many pairs of conjugacy classes 
$C_1,C_2$ such that choosing independent uniform random elements $y_1\in C_1$ and $y_2\in C_2$, 
the resulting probability distribution of $y_1y_2$ on $G$ is close to uniform.  Indeed, this is shown in \cite{Sh}. 
The basic method to prove that a particular pair of conjugacy
classes satisfies this property is to get good upper bounds on the size of 
irreducible character values at $C_1$ and $C_2$.
One can prove that, for any given $\epsilon > 0$, most elements $g \in G$ 
satisfy the property that 
$|\chi(g)| \le \chi(1)^\epsilon$ (see Proposition 4.2 below).
Setting $y_1 = x_1^{m_1}, y_2 = x_2^{m_2}$, one then needs to prove that the inverse images by
power maps of the set of ``bad'' elements of $G$ is also small.  The difficulty is that the
sizes of fibers of the $m$th power map vary enormously.  The technical heart of this paper is devoted to the problem
of controlling the size of ``typical'' fibers for groups of Lie type of high rank over small fields. Once this is achieved we prove Theorem~\ref{main} using
character methods.

In the final section of the paper, we show that another family of words is almost uniform on finite
simple groups. A word $w \in F_d$ is called {\it admissible} if every
variable $x_i$ which occurs in $w$ occurs once with exponent $1$ and once
with exponent $-1$. For example, the commutator word is admissible,
as well as many other words, such as $x_1 \cdots x_d x_1^{-1} \cdots x_d^{-1}$.

\begin{prop} 
\label{admissible}
Every admissible word $w \ne 1$ is almost uniform on finite
simple groups.
\end{prop}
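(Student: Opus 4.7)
The plan is to apply Fourier analysis on $G$. Since $P_{w,G}$ is a class function, expand it in the basis of irreducible characters of $G$:
\[
P_{w,G}(g) = \frac{1}{|G|} + \sum_{\chi \ne 1} c_\chi(w)\, \chi(g), \qquad c_\chi(w) = \frac{1}{|G|^{d+1}} \sum_{x \in G^d} \overline{\chi(w(x))}.
\]
The heart of the argument is to establish the key bound
\[
\Bigl| \sum_{x \in G^d} \chi(w(x)) \Bigr| \leq \frac{|G|^d}{\chi(1)}
\]
for every admissible word $w \ne 1$ in $d$ variables and every nontrivial irreducible character $\chi$ of $G$, equivalently $|c_\chi(w)| \leq (|G|\chi(1))^{-1}$.

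To prove the key bound I would induct on $d$. The base case $d=2$: up to cyclic rotation and inversion of generators, the only nontrivial cyclically reduced admissible word in two variables is the commutator $[x_1,x_2]$, and Frobenius's formula gives $\sum \chi([x_1,x_2]) = |G|^2/\chi(1)$. For $d \ge 3$, cyclic rotation (which preserves $\chi(w)$) places $w$ in the form $w = a(x_1,\ldots,x_{d-1}) \cdot x_d \cdot b(x_1,\ldots,x_{d-1}) \cdot x_d^{-1}$, since $x_d$ appears in $w$ exactly twice, once with each sign. The identity $\sum_y \chi(hyky^{-1}) = |G|\chi(h)\chi(k)/\chi(1)$ gives
\[
\sum_{x \in G^d} \chi(w(x)) = \frac{|G|}{\chi(1)} \sum_{x \in G^{d-1}} \chi(a(x))\,\chi(b(x)),
\]
reducing the task to $|\sum \chi(a)\chi(b)| \le |G|^{d-1}$. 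The concatenation $ab$ is admissible in $d-1$ variables. When $ab = 1$ in $F_{d-1}$, then $b = a^{-1}$ and admissibility of $w$ forces each $x_i$ ($i<d$) to appear exactly once in $a$, so the word map $a\colon G^{d-1} \to G$ is uniform with constant fibre $|G|^{d-2}$, and Parseval gives $\sum_x |\chi(a(x))|^2 = |G|^{d-1}$. When $ab \ne 1$, I split further: either some variable $x_j$ appears once in $a$ and once in $b$ (with opposite signs), in which case cyclic rotation of $a$ and $b$ separately makes $a = A x_j$ and $b = x_j^{-1} B$ (signs absorbed into $A,B$), and the identity $\sum_y \chi(Ay)\chi(y^{-1}B) = (|G|/\chi(1))\chi(AB)$ sums out $x_j$ and reduces to a character sum on the admissible word $AB$ in $d-2$ variables (to which the inductive hypothesis applies, with an extra $1/\chi(1)$ to spare); or else no such split variable exists, in which case $a$ and $b$ depend on disjoint variable sets and are each admissible, so the sum factors as $(\sum \chi(a))(\sum \chi(b))$ and the inductive hypothesis bounds each factor separately.

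With the key bound in hand, Parseval's theorem gives
\[
\|P_{w,G} - \mu_G\|_2^2 = |G| \sum_{\chi \ne 1} |c_\chi(w)|^2 \leq \frac{1}{|G|} \sum_{\chi \ne 1} \chi(1)^{-2},
\]
and Cauchy--Schwarz yields $\|P_{w,G} - \mu_G\|_1 \leq \bigl(\sum_{\chi \ne 1}\chi(1)^{-2}\bigr)^{1/2}$. The Witten zeta function estimate for finite simple groups, due to Liebeck and Shalev \cite{LiSh1}, asserts that $\sum_{\chi \ne 1} \chi(1)^{-s} \to 0$ as $|G| \to \infty$ for every fixed $s > 0$, so the right-hand side tends to $0$ along finite simple groups of unbounded order, completing the proof.

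The main obstacle is closing the inductive step of the key bound in the case $ab \ne 1$. The naive identity $\chi(a)\chi(b) = (\chi(1)/|G|) \sum_y \chi(ayby^{-1})$ simply reverses the reduction and so is circular; the argument must therefore proceed via the case split sketched above, with the further nuisance of handling variables appearing twice inside one of $a$ or $b$ (these can be dealt with by iterating the cyclic-rotation reduction within $a$ alone, at the cost of picking up extra factors $1/\chi(1)$ that only strengthen the final bound).
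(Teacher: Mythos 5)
Your argument is correct in its essentials and takes a genuinely different route from the paper. The paper treats Parzanchevski--Schul \cite{PS} as a black box: their formula $N_w^{\chi} = |G|^{d-1}/\chi(1)^{d-r}$ combined with their parity theorem ($r \not\equiv d \bmod 2$) yields $N_w^{\chi} = |G|^{d-1}/\chi(1)^{k}$ with $k \ge 1$ \emph{exactly}, after which the Cauchy--Schwarz step (Lemma~2.3 of \cite{GSh}) and the zeta estimate of \cite{LiSh2} finish. You instead prove, self-containedly, the one-sided inequality $|N_w^{\chi}| \le |G|^{d-1}/\chi(1)$ by induction on the number of variables, using cyclic rotation, the identity $\sum_y \chi(hyky^{-1}) = |G|\chi(h)\chi(k)/\chi(1)$, and Schur orthogonality to sum out shared variables. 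This is weaker than the exact formula but entirely sufficient, since $1/\chi(1)^k \le 1/\chi(1)$. Your case analysis (the case $ab=1$ where $w$ is a commutator $[a,x_d]$, the split-variable case reducing to $d-2$ variables, and the disjoint-variable case where the sum factors) is exhaustive; the ``nuisance'' you flag of variables occurring twice within $a$ or $b$ is in fact cleanly absorbed into the disjoint-variable case and poses no real obstacle. Two small points: the zeta estimate $\zeta^G(2) \to 1$ is in \cite{LiSh2} (Fuchsian groups), not \cite{LiSh1}; and in the base case one should note that inverting a generator ($x_2 \mapsto x_2^{-1}$) preserves $P_{w,G}$, which you implicitly use to reduce to the commutator. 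The tradeoff between the two approaches is the usual one: the paper's proof is shorter given the machinery of \cite{PS}, while yours replaces that dependence with a more hands-on character computation that also makes transparent why the bound is tight precisely when $w$ is a single commutator.
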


Our original proof of this result relied on \cite{DN}, but here we
give a shorter proof based on the recent subsequent paper \cite{PS}
by Parzanchevski and Schul.

\section{Random Polynomials over Finite Fields}

In this section, we show, roughly, that a random degree $n$ polynomial over $\F_q$
rarely has more than $O(\log n)$ factors over $\F_q[x]$.
This is relevant because an element of $\GL_n(\F_q)$ with many $m$th roots must have a highly factorable characteristic polynomial.
Since we are not interested in $\GL_n(\F_q)$ itself but in special linear, unitary, orthogonal, and symplectic groups,
the precise statements must be modified accordingly.

We begin with a useful identity, which is essentially the Euler product formula for the zeta-function
of $\F_q[x]$.  Throughout the paper, we will denote by $P_n(x)$ the polynomial
$$\frac{\sum_{ij=n} \mu(i) x^j}{n},$$
where $\mu$ is the M{\"o}bius function.

\begin{lemma}
\label{identity}
If $q$ is a prime power, then $P_n(q)$ is a positive integer for all $n\ge 1$, and we have a power series identity
$$\prod_{n=1}^\infty (1-x^n)^{-P_n(q)} = \sum_{i=0}^\infty q^i x^i.$$
\end{lemma}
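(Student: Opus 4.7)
The plan is to identify $P_n(q)$ with the number $N_n(q)$ of monic irreducible polynomials of degree $n$ in $\F_q[x]$, and then to derive the product identity directly from unique factorization in $\F_q[x]$. The first step is an Euler-product identity: since $\F_q[x]$ is a UFD, every monic polynomial factors uniquely into monic irreducibles. Expanding each factor $(1-x^{\deg P})^{-1}$ over the monic irreducibles $P$ as a geometric series and multiplying out, each monic $f\in\F_q[x]$ appears exactly once in the expansion, contributing the term $x^{\deg f}$. Grouping the irreducibles on the left by degree, and the monics on the right by degree (using that there are exactly $q^d$ monic polynomials of degree $d$), one obtains
\[\prod_{n=1}^\infty (1-x^n)^{-N_n(q)} \;=\; \prod_{P\text{ monic irr.}}(1-x^{\deg P})^{-1} \;=\; \sum_{d=0}^\infty q^d x^d.\]

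Next I would show $N_n(q) = P_n(q)$ by taking logarithms of this outer identity. The right side becomes $\sum_{k\ge 1} q^k x^k / k$, while the left side, after switching the order of summation, becomes $\sum_{m\ge 1}\frac{x^m}{m}\sum_{d\mid m} d\,N_d(q)$. Matching coefficients yields $q^m = \sum_{d\mid m} d\,N_d(q)$, and M\"obius inversion gives $n N_n(q) = \sum_{d\mid n}\mu(n/d) q^d = \sum_{ij=n}\mu(i) q^j$, so $N_n(q) = P_n(q)$.

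Finally, $P_n(q) = N_n(q) \ge 1$ because $\F_{q^n}$ contains elements not lying in any proper subfield --- indeed $\sum_{d\mid n,\, d<n} q^d < q^n$ bounds the size of the union of all proper subfields --- and any such element has minimal polynomial of degree exactly $n$ over $\F_q$. There is no serious obstacle here: the statement is classical, reducing to unique factorization in $\F_q[x]$ together with M\"obius inversion, and the only subtlety is strict positivity (as opposed to mere integrality), which is dispatched by the subfield-counting bound above.
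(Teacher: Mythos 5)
Your proof is correct, and it takes a genuinely different route from the paper's. You derive the identity from unique factorization in $\F_q[x]$: the Euler product over monic irreducibles, expanded by degree, gives $\prod_n(1-x^n)^{-N_n(q)}=\sum_d q^d x^d$, and then logarithmic differentiation plus M\"obius inversion identifies $N_n(q)$ with $P_n(q)$; positivity comes from counting primitive elements of $\F_{q^n}$. The paper instead takes the closed form $P_n(q)=\frac{1}{n}\sum_{ij=n}\mu(i)q^j$ as the starting point, asserts by a ``standard inclusion-exclusion argument'' that this counts irreducible monics (hence is a positive integer), and proves the product identity by a purely formal exp/log manipulation: writing the product as $\exp\bigl(\sum_{i,j,k}\mu(i)q^j x^{ijk}/(ijk)\bigr)$, regrouping, and collapsing the inner M\"obius sum to get $\exp\bigl(\sum_j q^j x^j/j\bigr)=(1-qx)^{-1}$. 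Your approach is more conceptual, making the connection to $\zeta_{\F_q[x]}$ explicit and deriving both claims of the lemma from a single source (unique factorization); the paper's is a shorter self-contained power-series computation that does not invoke any algebraic structure beyond the M\"obius identity $\sum_{d\mid m}\mu(d)=[m=1]$. Both are standard and both are fine here.
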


\begin{proof}
By a standard inclusion-exclusion argument, $P_n(q)$ is the number of
irreducible monic polynomials of degree $n$ over $\F_q$ and is therefore a positive integer.
Moreover,
\begin{align*}
\prod_{n=1}^\infty (1-x^n)^{-\frac{\sum_{ij=n}\mu(i)q^j}n}
&=\exp\Bigl(\sum _{n=1}^\infty \frac{\sum_{ij=n}\mu(i)q^j}n\sum_{k=1}^\infty \frac{x^{kn}}k\Bigr) \\
&=\exp\Bigl(\sum_{i,j,k}\frac{\mu(i)q^j x^{ijk}}{ijk}\Bigr)\\
&=\exp\Bigl(\sum_{j,m}\frac{q^j x^{jm}}{jm}\sum_{ik=m} \mu(i)\Bigr) \\
& = \exp\Bigl(\sum_{j}\frac{q^j x^{j}}{j}\Bigr) \\
& = (1-qx)^{-1}.
\end{align*}
\end{proof}

\begin{prop}
\label{euler}
Let $a_1\le a_2$ be positive and $q\ge 2$.
Let $e_1,e_2,\ldots$ denote an infinite sequence of non-negative integers such that 
\begin{equation}
\label{e-bound}
a_1 q^n/n \le e_n \le a_2 q^n/n
\end{equation}
for all $n\ge 1$, and let $d_0=1,d_1,\ldots$ be defined by
\begin{equation}
\label{def-c}
\sum_{n=0}^\infty d_n x^n := \prod_{n=1}^\infty (1-x^n)^{-e_n}.
\end{equation}
Then there exist $A_1,A_2$ depending only
on $a_1$ and $a_2$ respectively such that for all $n\ge 0$,
$$(n+1)^{A_1} q^n \le d_n \le (n+1)^{A_2} q^n.$$
\end{prop}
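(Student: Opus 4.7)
The plan is to compare the generating series $F(x) := \sum_n d_n x^n$ with an explicit ``model'' series arising from $e_n = c q^n/n$, via a factorization through $(1-qx)^{-c}$. The first ingredient is a monotonicity observation. Writing
$$\prod_{n\ge 1}(1-x^n)^{-e_n} = \exp\Bigl(\sum_{n,k\ge 1} \frac{e_n\,x^{nk}}{k}\Bigr),$$
the coefficient of $x^N$ in $F(x)$ is a polynomial in $e_1,\dots,e_N$ with nonnegative coefficients, a formula still meaningful when the $e_n$ are allowed to be real. Hence $d_N$ is monotone in each $e_n$, and it suffices to prove the upper bound in the case $e_n = a_2 q^n/n$ and the lower bound in the case $e_n = a_1 q^n/n$. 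From now on I fix $c>0$ and take $e_n = c q^n/n$.

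A direct computation modelled on the proof of Lemma~\ref{identity} yields
$$\log F(x) = c\sum_{m\ge 1}\frac{x^m}{m}\sum_{n\mid m} q^n = -c\log(1-qx) + R(x),$$
where $R(x) := c\sum_{m\ge 1}(x^m/m)\sum_{n\mid m,\,n<m} q^n$ collects the contributions of proper divisors. Every proper divisor of $m$ is at most $m/2$, so for $q\ge 2$ the geometric series bound gives $\sum_{n\mid m,\,n<m} q^n \le 2q^{m/2}$, whence the coefficients of $R$ are dominated by those of $-2c\log(1-q^{1/2}x)$. Because $\exp$ preserves coefficient-wise domination for series with nonnegative coefficients, $G(x) := \exp R(x)$ has nonnegative coefficients bounded by those of $(1-q^{1/2}x)^{-2c}$, and I obtain the factorization $F(x) = (1-qx)^{-c}\,G(x)$.

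It remains to read off coefficients. The $n$th coefficient of $(1-qx)^{-c}$ is $q^n\binom{c+n-1}{n}$, and the ratio $\binom{c+n-1}{n}/(n+1)^{c-1}$ lies between two positive constants depending only on $c$; the $n$th coefficient of $G(x)$ is at most $q^{n/2}\binom{2c+n-1}{n} = O\bigl(q^{n/2}(n+1)^{2c-1}\bigr)$. For the upper bound I convolve and split at $k = N/2$: the weight $q^{-k/2}$ makes $k > N/2$ exponentially small, while for $k \le N/2$ the factor $(N-k+1)^{c-1}$ is within a constant of $(N+1)^{c-1}$, and $\sum_{k \ge 0} q^{-k/2}(k+1)^{2c-1}$ is bounded by a constant depending only on $c$ (since $q^{-k/2}\le 2^{-k/2}$). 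This gives $d_N \le C_c\,(N+1)^{c-1}q^N$. For the lower bound, $G(0)=1$ and the nonnegativity of $G$'s coefficients give $d_N \ge q^N\binom{c+N-1}{N} \ge c_c\,(N+1)^{c-1}q^N$ directly. Choosing $A_2$ (respectively $A_1$) to absorb these multiplicative constants into the polynomial exponent — the binding case is $N=1$ — produces constants depending only on $a_2$ (respectively $a_1$).

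The main obstacle is the constant-tracking in the last step: I must verify that the mild polynomial factor $(n+1)^{2c-1}$ contributed by $G$ does not inflate the final exponent past $c-1$, and that auxiliary constants such as $(1-q^{-1/2})^{-2c}$ can be bounded uniformly in $q \ge 2$. Both are consequences of the geometric decay $q^{-k/2}$ in the convolution, and this is precisely where the hypothesis $q \ge 2$ does real work.
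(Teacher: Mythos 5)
Your proof is correct, but the treatment of the extreme case is genuinely different from the paper's. You share the monotonicity reduction (each $d_N$ is a polynomial in $e_1,\dots,e_N$ with nonnegative coefficients, hence nondecreasing in each $e_j$), but the paper takes the extreme sequence to be $e_n=\lambda P_n(q)$ --- a choice for which Lemma~\ref{identity} collapses the generating function to $(1-qx)^{-\lambda}$ exactly, with no error term, after which one only needs to note that $P_n(q)/(q^n/n)$ is bounded between two positive constants for $q\ge 2$. You instead work with $e_n=cq^n/n$ directly; the M{\"o}bius cancellation is then unavailable, so $\sum_{n\mid m}q^n$ leaves a remainder after the main contribution $q^m$. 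You absorb this remainder into a factor $G(x)=\exp R(x)$, dominate it coefficient-wise by $(1-q^{1/2}x)^{-2c}$ using the fact that proper divisors of $m$ are at most $m/2$ together with $q\ge 2$, and finish with a convolution estimate against $(1-qx)^{-c}$. Both routes are sound. The paper's is shorter because the choice $e_n=\lambda P_n(q)$ makes the error term vanish outright; yours avoids the detour through the irreducible-polynomial count $P_n(q)$ and is more transparently quantitative, at the cost of the factorization and convolution machinery.
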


\begin{proof}
Lemma~\ref{identity} gives the proposition immediately for one particular sequence, namely
$e_n := P_n(q)$.  
By the binomial theorem, it gives the theorem more generally
for $e_n := \lambda P_n(q)$ for any constant $\lambda>0$.
Regarding the $d_i$ as functions in the $e_j$, each $d_i$ is nondecreasing in each $e_j$ separately, so the proposition holds as long as
$$a_1 P_n(q) \le e_n \le a_2 P_n(q)$$
for all $n$.  As 
$$\frac{\sum_{ij=n}\mu(i)q^j}{q^n}$$
is bounded away from $0$ and $\infty$ for all $q\ge 2$ and $n\ge 1$, the proposition holds.
\end{proof}

\begin{prop}
\label{y-power-bound}
Let $e_n$ be a sequence satisfying (\ref{e-bound}) for some $q$, $a_1$, and $a_2$.
Define $c_{i,j}$ by
$$\sum_{i,j} c_{i,j} x^i y^j := \prod_{i=1}^\infty (1- x^i y)^{-e_i}.$$
There exist $C$ and $D>1$ depending only on $a_1$ and $a_2$ such that for all
$0\le m\le n$,
$$\frac{\sum_{j=m+1}^n c_{n,j}}{\sum_{j=0}^n c_{n,j}} \le n^C D^{-m}.$$
\end{prop}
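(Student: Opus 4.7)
The plan is to control the tail via Markov's inequality applied to a moment-generating function. Define
$$c_n(y) := \sum_{j=0}^n c_{n,j}\, y^j = [x^n]\prod_i(1-yx^i)^{-e_i}, \qquad d_n := c_n(1) = \sum_j c_{n,j}.$$
For any $y > 1$, Markov's inequality gives
$$\sum_{j=m+1}^n c_{n,j} \;\le\; y^{-m}\, c_n(y),$$
so the proposition follows once I fix some $y > 1$, depending only on $a_1, a_2$, and prove a polynomial bound $c_n(y)/d_n \le (n+1)^B$ with $B$ depending only on $a_1, a_2$. Then $D := y$ and $C := B$ (enlarged to absorb a multiplicative constant) satisfy the claim.

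Proposition~\ref{euler} already supplies the lower bound $d_n \ge (n+1)^{A_1}q^n$, so the task reduces to an upper bound $c_n(y) \le C_1(n+1)^{B'}q^n$. I mimic the proof of Proposition~\ref{euler}: the coefficient $c_n(y)$ is monotone nondecreasing in each $e_i$, and since $P_i(q) \asymp q^i/i$ uniformly in $q\ge 2$, the hypothesis (\ref{e-bound}) gives $e_i \le \lambda P_i(q)$ for some $\lambda = \lambda(a_2)$. Hence it suffices to bound $[x^n] F_q(x,y)^\lambda$, where $F_q(x,y) := \prod_i(1-yx^i)^{-P_i(q)}$. Fixing $y \in (1,2)$, so that $y < q$ for every $q\ge 2$, I claim
$$\log F_q(r,y) \;=\; -y\log(1-qr) + O(1), \qquad r\in[0,1/q),\ q\ge 2,$$
with the $O(1)$ uniform. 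Given this, $F_q(r,y)^\lambda \le C_2(1-qr)^{-\lambda y}$, and the pointwise bound $[x^n]F_q^\lambda \le F_q(r,y)^\lambda/r^n$ (valid because all coefficients are nonnegative), with the choice $r := (1-1/(n+1))/q$, yields $c_n(y) \le C_3(n+1)^{\lambda y}q^n$.

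The uniform estimate on $\log F_q(r,y)$ is the technical heart of the argument, and the main obstacle. Writing $\log F_q(r,y) = \sum_{k\ge 1}(y^k/k)\, S(r^k)$ with $S(z) := \sum_n P_n(q) z^n$, M\"obius inversion (parallel to the proof of Lemma~\ref{identity}) gives
$$S(z) \;=\; -\sum_{d\ge 1}\frac{\mu(d)}{d}\log(1-qz^d) \;=\; -\log(1-qz) + B(z),$$
with $|B(z)| = O(1)$ uniformly for $z\in[0,1/q]$ and $q\ge 2$ (the $d\ge 2$ terms converge geometrically since $qz^d \le q^{1-d} \le 1/2$). The $k=1$ term therefore produces $-y\log(1-qr) + O(1)$. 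For $k \ge 2$ and $r \le 1/q$, the condition $qr^k \le q^{1-k}\le 1/2$ gives $|S(r^k)| \le 2qr^k + O(qr^{2k})$, whence the tail
$$\sum_{k\ge 2}\frac{y^k}{k}|S(r^k)| \;\ll\; q\sum_{k\ge 2}\frac{(y/q)^k}{k} \;=\; O(1)$$
uniformly in $q\ge 2$, precisely because $y<q$ for all admissible $q$. Combining the resulting bound on $c_n(y)$ with Proposition~\ref{euler}'s lower bound on $d_n$ yields $c_n(y)/d_n \le C_3(n+1)^{\lambda y - A_1}$, and Markov then completes the proof with $D := y$ and $C := \lambda y - A_1$ (enlarged suitably to absorb $C_3$).
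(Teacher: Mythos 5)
Your proof is correct, and after the common opening move it takes a genuinely different route from the paper's. Both arguments begin by observing that for $y>1$,
$$y^m\sum_{j=m+1}^n c_{n,j}\le \sum_{j=0}^n c_{n,j}\,y^j=[x^n]\prod_i(1-yx^i)^{-e_i},$$
which is exactly your Markov step (the paper takes $y=9/4$). Where you part ways is in bounding $c_n(y):=[x^n]\prod_i(1-yx^i)^{-e_i}$. The paper stays entirely combinatorial: it dominates the product coefficientwise by $(1-(9/4)x)^{-e_1}\prod_i(1-(3x/2)^i)^{-e_i}$, using $(3/2)^i\ge 9/4$ for $i\ge 2$, reduces to the known bound on $d_n$ after the rescaling $x\mapsto 3x/2$, and estimates the resulting convolution directly. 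You instead make the generating function analytic: after invoking monotonicity in $e_i$ to reduce to $F_q(x,y)^\lambda$, you compute $\log F_q$ by a second M\"obius inversion parallel to Lemma~\ref{identity}, establish the uniform estimate $\log F_q(r,y)=-y\log(1-qr)+O(1)$ on $[0,1/q)$, and extract $c_n(y)\le F_q(r,y)^\lambda/r^n$ by a radial (saddle-point) bound with $r=(1-1/(n+1))/q$. Your approach buys a cleaner uniformity argument: all constants are visibly controlled by $a_1,a_2$ and the fixed $y\in(1,2)$, whereas the paper's convolution estimate introduces $\binom{n+e_1-1}{e_1-1}$, making the $q$-independence of the exponent less transparent. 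The trade-off is that you need the uniform logarithmic expansion (with its $k\ge 2$ tail estimate requiring $y<q$), while the paper's comparison is purely formal once the clever $9/4$ versus $3/2$ choice is made. Both are valid; yours is closer in spirit to standard analytic-combinatorics saddle-point bounds, and both ultimately rest on Proposition~\ref{euler}'s lower bound $d_n\ge (n+1)^{A_1}q^n$ to convert the upper estimate on $c_n(y)$ into the stated ratio bound.
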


\begin{proof}
We have
$$(9/4)^m\sum_{j=m+1}^n c_{n,j} < \sum_{j=0}^n c_{n,j}(9/4)^j,$$
which is the $x^n$ coefficient of
$$\prod_{i=1}^\infty (1- (9/4)x^i)^{-e_i}.$$
This is less than or equal to the $x^n$ coefficient of
$$(1-(9/4)x)^{-e_1}\prod_{i=1}^\infty (1 - (3x/2)^i)^{-e_i}
= (1-(9/4)x)^{-e_1}\sum_{n=0}^\infty (3/2)^nd_n x^n.$$
By Proposition~\ref{euler}, there exists $A_2 > 0$ such that
$$d_n \le (n+1)^{A_2} q^n$$
so
\begin{align*}
\sum_{i=0}^n (9/4)^i \binom{i+e_1-1}{e_1-1} & (3/2)^{n-i}d_{n-i}  \\
&\le (3/2)^n\binom{n+e_1-1}{e_1-1} (n+1)^{A_2} q^n \sum_{i=0}^n (3/2q)^i  \\
&= O(n^{A_2+e_1-1}(3/2)^nq^n)
\end{align*}
since $q > 3/2$.  Thus,
$$\sum_{j=m+1}^n c_{n,j} = O(n^{A_2}(2q/3)^n),$$
which, together with the lower bound $n^{A_1} q^n$ for $\sum_{j=0}^n c_{n,j}$ given by
Proposition~\ref{euler}, implies the proposition.
\end{proof}

\begin{prop}
\label{SLn-prop}
Let $\L_n(q)$ denote the set of monic polynomials $P(x)\in\F_q[x]$ 
of degree $n$ such that $P(0)= (-1)^n$.
For all $k > 0$, there exists $N$ such that if $n\ge 2$ and
$q$ is a prime power, then the set of $P(x)\in \L_n(q)$ such that
$P(x)$ splits into more than than $N\log n$ irreducible factors
has at most $n^{-k}|\L_n(q)|$ elements. 
\end{prop}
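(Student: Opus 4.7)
The plan is to reduce counting over $\L_n(q)$ to counting over all monic polynomials of degree $n$ via an $\F_q^*$-symmetry, and then to apply Proposition~\ref{y-power-bound}. The main obstacle is that $|\L_n(q)|=q^{n-1}$, a factor of $q$ smaller than the number $q^n$ of all monic polynomials of degree $n$: a naive termwise comparison loses this factor, giving a bound with a $q$-dependence that is useless when $q$ is large. The scaling action of $\F_q^*$ will recover this factor up to a polynomial-in-$n$ loss.

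Specifically, consider the action of $\F_q^*$ on monic polynomials of degree $n$ defined by $\lambda\cdot P(x):=\lambda^n P(x/\lambda)$. This sends monic polynomials to monic polynomials, and since each map $f(x)\mapsto \lambda^{\deg f}f(x/\lambda)$ preserves irreducibility of a monic $f$, the action preserves the multiset of monic irreducible factors; in particular it preserves the number of irreducible factors counted with multiplicity. It scales the constant term by $\lambda^n$. Let $H:=(\F_q^*)^n$, a subgroup of $\F_q^*$ of index $d:=\gcd(n,q-1)\leq n$. Writing $N_m^{(a)}$ for the number of monic degree-$n$ polynomials $P$ with $P(0)=a$ and more than $m$ irreducible factors, the symmetry shows that $N_m^{(a)}$ is constant on cosets of $H$ in $\F_q^*$. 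Since $(-1)^n\in H$ (it equals $(\pm 1)^n$), summing over the coset $H$ gives
$$|H|\cdot N_m^{((-1)^n)} \;=\; \sum_{a\in H} N_m^{(a)} \;\leq\; \#\{P\text{ monic},\ \deg P=n,\ \text{with more than }m\text{ factors}\}.$$

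To bound the right-hand side, I would apply Proposition~\ref{y-power-bound} with $e_i:=P_i(q)$. The sequence $P_i(q)$ satisfies (\ref{e-bound}) with absolute constants $a_1,a_2$ (since $P_i(q)\cdot i/q^i$ is bounded between $1/2$ and $1$ for $i\geq 1$, $q\geq 2$), and Lemma~\ref{identity} gives $\sum_j c_{n,j}=q^n$. The proposition then yields absolute $C$ and $D>1$ with
$$\#\{P\text{ monic, deg }n,\text{ more than }m\text{ factors}\} \;\leq\; n^C D^{-m} q^n.$$
Using $|H|=(q-1)/d$, $d\leq n$, and $q^n/(q-1)\leq 2q^{n-1}$ for $q\geq 2$, I obtain
$$N_m^{((-1)^n)} \;\leq\; \frac{d\cdot n^C D^{-m} q^n}{q-1} \;\leq\; 2\, n^{C+1} D^{-m}\, |\L_n(q)|.$$
Choosing $N>(k+C+2)/\log D$ and $m:=\lfloor N\log n\rfloor$, the right-hand side is at most $n^{-k}|\L_n(q)|$ for every $n\geq 2$. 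The essential point is that the $\F_q^*$-symmetry provides uniformity in $q$: it trades the factor $q$ lost by the naive bound for the factor $d\leq n$, which is easily absorbed into the polynomial factor $n^{C+1}$.
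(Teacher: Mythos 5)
Your proof is correct, but it takes a genuinely different route from the paper. The paper isolates the constraint $P(0)=(-1)^n$ by Fourier analysis on $\F_q^\times$: it writes the two-variable generating function for $\L_n(q)$ as a $\frac{1}{q-1}\sum_\chi(\cdots)$ over multiplicative characters $\chi$ of $\F_q^\times$ using the norm maps $\IL_n(q)\to\F_q^\times$, shows via Proposition~\ref{euler} that the nontrivial characters contribute only $O((n+1)^{A_2}q^{n/2})$, and applies Proposition~\ref{y-power-bound} to the trivial-character term. You instead exploit the scaling action $\lambda\cdot P(x)=\lambda^n P(x/\lambda)$ of $\F_q^\times$ on monic degree-$n$ polynomials, which preserves the number of irreducible factors and translates $P(0)$ by $\lambda^n$; this makes $N_m^{(a)}$ constant on cosets of $(\F_q^\times)^n$ and lets you recover the lost factor of $q$ at the cost of $\gcd(n,q-1)\le n$, which is harmless. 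Your argument is more elementary (no characters, no separate control of error terms, no need for the $\IL_1(q)\ne P_1(q)$ bookkeeping), and it reuses Proposition~\ref{y-power-bound} only once with the sequence $e_i=P_i(q)$. The paper's character method, on the other hand, is the one that transposes directly to the unitary case (Proposition~\ref{SUn-prop}), where the analogue of your scaling action is less transparent, so it is the more uniform tool across the three propositions. One small point: with $m=\lfloor N\log n\rfloor$ and the bound $2\,n^{C+1}D^{-m}$, the threshold $N>(k+C+2)/\log D$ may not quite absorb the constant $2D$ coming from the floor for $n=2$; taking $N$ slightly larger, say $N>(k+C+2+\log_2(2D))/\log D$, closes this, but it is a bookkeeping adjustment and not a real gap.
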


\begin{proof}
Let $\IL_n(q)$ denote the set of irreducible monic polynomials in $\F_q[x]$, excluding $x$.
Thus $\IL_n(q)$ can be identified with the set of $n$-element $q$-Frobenius orbits
in $\bar\F_q^\times$, which means
$$|\IL_n(q)| = \frac{\sum_{ij=n}\mu(i) (q^j-1)}n.$$
Thus $|\IL_n(q)| = P_n(q)$ except for $n=1$, and $\IL_1(q) = q-1$.
The norm maps $N_{\F_{q^n}/\F_q}\colon \F_{q^n}^\times\to \F_q^\times$
are surjective, so they combine to give morphisms
$N_n\colon \IL_n(q)\to \F_q^\times = \GL_1(\F_q)$
for which every fiber has the same cardinality, namely $P_n(q)/(q-1)$,
except when $n=1$, when the cardinality is $1$.

If $c_{n,m}$ denotes the number of elements in $\L_n(q)$ with exactly $m$ irreducible factors, then
\begin{align*}
\sum_{m,n}c_{n,m}x^ny^m
&= \frac 1{q-1}\sum_\chi \prod_{n=1}^\infty \prod_{P\in \IL_n(q)}(1-\chi(N_n(P))x^n y)^{-1},\\
&= \frac 1{q-1}\sum_\chi\prod_{n=1}^\infty (1-x^{n\,\ord(\chi)}y^{\ord(\chi)})^{-|\IL_n(q)|/\ord(\chi)}
\end{align*}
where the sum ranges over all characters $\chi$ of $\GL_1(\F_q)$.  (The second equality holds 
because the composition of $\chi$ and $N_n$ gives a map from $\IL_n(q)$
to the cyclic group $\langle e^{2\pi i/\ord(\chi)}\rangle$ whose fibers all have the same cardinality.)
Let $c_{n,m}(\chi)$ denote the $x^n y^m$ coefficient of 
$$\prod_{n=1}^\infty (1-x^{n\,\ord(\chi)}y^{\ord(\chi)})^{-|\IL_n(q)|/\ord(\chi)}.$$
Note that $c_{n,m}(\chi)$ is real and non-negative for all $m$, $n$, and $\chi$.
Proposition~\ref{euler} implies that for all $\chi$ of order $\ge 2$,
$$\sum_{m=0}^{\infty} c_{m,n}(\chi) = O((n+1)^{A_2}q^{n/2})$$
for some absolute constant $A_2$.  Therefore, the number of elements of
$\L_n(q)$ with more than $m$ prime factors is 
$$\frac{\sum_{i=m+1}^n c_{n,i}(1)}{q-1} + O((n+1)^{A_2} q^{n/2-1}),$$
while
$$\sum_{i=0}^n c_{n,i}(1) = q^{n-1} + O((n+1)^{A_2} q^{n/2}).$$
By Proposition~\ref{y-power-bound},
$$\frac{\sum_{i=m+1}^n c_{n,i}(1)}{\sum_{i=0}^n c_{n,i}(1)} = n^C D^{-m}
= n^{C - \frac{m\log D}{\log n}},$$
where $C$ and $D>1$ are absolute constants.  It follows that if 
$$m > \frac{C+k}{\log D}\log n,$$
then
$$\frac{\sum_{i=m+1}^n c_{n,i}}{\sum_{i=0}^n c_{n,i}} < n^{-k}$$
\end{proof}

\begin{prop}
\label{SUn-prop}
Let $\U_n(q)$ denote the set of monic polynomials $P(x)\in \F_{q^2}[x]$ such that 
$$\bar P(x) = (-x)^n P(1/x),$$
where $\bar P$ denotes the polynomial obtained from $P$ by applying the $q$-Frobenius to each coefficient.  For all $k > 0$, there exists $N$ such that if $n\ge 2$ and
$q$ is a prime power, then the set of $P(x)\in \U_n(q)$ such that
$P(x)$ splits into more than than $N\log n$ irreducible factors over $\F_{q^2}[x]$
has at most $n^{-k}|\U_n(q)|$ elements. 
\end{prop}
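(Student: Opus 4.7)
The plan is to adapt the proof of Proposition~\ref{SLn-prop} to the unitary setting. The key new feature is that the irreducible factors of $P\in\U_n(q)$ over $\F_{q^2}$ come in two flavors.

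I would first classify the factors. The identity $\bar P(x)=(-x)^n P(1/x)$ forces the multiset of roots of $P$ in $\bar\F_q^\times$ to be stable under the involution $\sigma\colon \alpha\mapsto\alpha^{-q}$ and to satisfy $P(0)=(-1)^n$. Because $\sigma^2=\Frob_{q^2}$, each irreducible factor $f\in\F_{q^2}[x]$ of $P$ is either self-dual (``type A''), meaning $\sigma$ permutes the root set of $f$, which forces $\deg f$ odd (since a single $k$-cycle has a square root in $S_k$ only for $k$ odd), or it occurs paired with a distinct companion $f^*$ whose roots are the $\sigma$-images of those of $f$ (``type B''). Let $S_k$ be the number of type A irreducibles of degree $k$, and $T_k$ the number of type B pairs with each element of degree $k$; a M\"obius count gives $S_k+2T_k=P_k(q^2)$, so $T_k$ is of order $q^{2k}/k$ (satisfying (\ref{e-bound}) with $q$ replaced by $q^2$), while $S_k$ is of the much smaller order $q^k/k$.

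Next I would set up the generating function, using characters $\chi$ of $\mu_{q+1}$ to enforce the constant-term condition in exact parallel with the use of characters of $\F_q^\times$ in Proposition~\ref{SLn-prop}. The analog of the norm map $N_n$ here is $f\mapsto -f(0)\in\mu_{q+1}$ on type A factors and $(g,g^*)\mapsto g(0)^{1-q}\in\mu_{q+1}$ on type B pairs; the latter is well-defined because a quick check in $\F_{q^2}^\times$ shows $g^*(0)^{1-q}=g(0)^{1-q}$. Each of these maps has equal-sized fibers, by the same cyclic-group argument used for $N_n$, now applied to $\mu_{q^k+1}$ and $\F_{q^{2k}}^\times$ on which $\sigma$ acts compatibly. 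Consequently, for each non-trivial $\chi$ the twisted Euler products collapse as in Proposition~\ref{SLn-prop} into $(1-x^{k\ord(\chi)}y^{\ord(\chi)})^{-S_k/\ord(\chi)}$ and $(1-x^{2k\ord(\chi)}y^{2\ord(\chi)})^{-T_k/\ord(\chi)}$ respectively.

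Finally I would extract coefficients. For the trivial character, the bulk of $|\U_n(q)|$ is contributed by the type B factor $\prod_k(1-x^{2k}y^2)^{-T_k}$; after the substitution $x'=x^2,\ y'=y^2$ this fits the form of Propositions~\ref{euler} and~\ref{y-power-bound} with base $q^2$, which yields decay $n^CD^{-m}$ for the proportion of polynomials with more than $m$ type B pair factors. The type A generating function contributes at most a $n^{O(1)}$ correction, because $S_k$ grows only like $q^k/k$; equivalently, applying Proposition~\ref{euler} with base $q$ shows that the typical number of type A factors is $O(\log n)$ and can be handled by a union bound. For non-trivial characters, Proposition~\ref{euler} gives a contribution of size $O(n^{A_2}q^{n-1})$, negligible next to the main term of size $q^n/(q+1)\asymp|\U_n(q)|$. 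The main obstacle will be to combine the $y$-weight count from type A factors with the $y^2$-weight count from type B pairs into a single bound on the total number of irreducible factors; this can be managed by noting that having more than $N\log n$ total factors forces either the type A count or the type B pair count to exceed $(N/4)\log n$, and then bounding each alternative separately by the estimates above.
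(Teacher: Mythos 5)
Your classification of the $\F_{q^2}$-irreducible factors into self-dual (``type A,'' odd degree) and paired (``type B'') ones is correct, as is the identification of the norm maps to $\mu_{q+1}$, and this is essentially the same data as the paper's $\IU_m(q)$: a type A factor of degree $k$ is an element of $\IU_k(q)$ ($k$ odd), and a type B pair of degree $k$ is a single element of $\IU_{2k}(q)$. But your decision to track $\F_{q^2}$-irreducible factors directly (giving a type B pair weight $y^2$) rather than tracking $T_q$-orbits (giving each $\IU_m$-element weight $y$) introduces real difficulties that the paper avoids, and the patches you propose for them do not go through as stated.

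Concretely, there are three gaps. First, the non-trivial character estimate is wrong: you claim a contribution of size $O(n^{A_2}q^{n-1})$ and call it ``negligible next to the main term of size $q^n/(q+1)$,'' but $q^{n-1}$ and $q^n/(q+1)$ are the same order of magnitude. Running the argument correctly --- for $\ord(\chi)\ge 2$, the effective base drops to $q^{1/\ord(\chi)}\le q^{1/2}$ in both the type A and type B Euler products --- yields $O(n^{O(1)}q^{n/2})$, which is what is actually needed. Second, the sequence $S_k$ vanishes for all even $k$, so it does \emph{not} satisfy the two-sided hypothesis (\ref{e-bound}) of Propositions~\ref{euler} and~\ref{y-power-bound}; you cannot ``apply Proposition~\ref{euler} with base $q$'' to the type A generating function in isolation. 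Third, the union-bound step (``either the type A count or the type B pair count exceeds $(N/4)\log n$'') is not covered by Proposition~\ref{y-power-bound} either: that proposition controls the $y$-weight distribution of a single Euler product $\prod_i(1-x^iy)^{-e_i}$, whereas bounding, say, $\Pr[\text{type A count}>t]$ requires the $x^n$ coefficient of $\prod_k(1-(9/4)x^ky)^{-S_k}\cdot\prod_k(1-x^{2k})^{-T_k}$, a convolution with an uncontrolled factor that is outside the scope of the cited lemma. (It also forces constants like $(9/4)^2>q^2$ when $q=2$, which kills the convergence used there.)

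All three problems disappear if you do what the paper does: give weight $y^1$ to every $\IU_m(q)$-element, i.e.\ to every type B pair as well as every type A factor. Then the sequence $e_m=|\IU_m(q)|$ satisfies (\ref{e-bound}) with base $q$ for all $m$ (your $S_k$ and $T_k$ recombine into $|\IU_k(q)|\approx P_k(q)$ with no parity gaps), the entire count is a single Euler product $\prod_m(1-x^m y)^{-|\IU_m(q)|}$ of exactly the form treated by Propositions~\ref{euler} and~\ref{y-power-bound}, and the non-trivial character contribution is $O(n^{O(1)}q^{n/2})$ by the same computation as for $\L_n(q)$. The passage from ``at most $N\log n$ $T_q$-orbits'' to ``at most $2N\log n$ irreducible factors over $\F_{q^2}$'' is the one-line observation that each $T_q$-orbit contributes at most two $\F_{q^2}$-irreducible factors, exactly as in the closing sentence of the paper's proof of Proposition~\ref{SOn-prop}.
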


\begin{proof}
The elements of $\U_n(q)$ are in bijective correspondence with $n$-element multisets in
$\bar\F_q$ with product $1$ which are stable under the map $T_q\colon x\mapsto x^{-q}$.
Let $\IU_n(q)$ denote the set of minimal $n$-element $T_q$-stable 
subsets of $\bar\F_q^\times$.
When $n$ is even, we can identify $\IU_n(q)$
with ($n$-element) $T_q$-orbits of elements $\alpha\in \F_{q^n}^\times$ such that 
$\F_q(\alpha) = \F_{q^n}$.  When $n$ is odd, we can identify it
with $T_q$-orbits of elements $\alpha\in \ker \F_{q^{2n}}^\times\to \F_{q^n}^\times$
such that $\F_q(\alpha) = \F_{q^{2n}}$.  Either way, the product of the elements in the
orbit is $\alpha^{1-q+q^2 - \cdots + (-q)^{n-1}}\in U_1(\F_q)$.
The $1-q+\cdots+(-q)^{n-1}$ power map gives a surjection from $\F_{q^n}^\times$ to $U_1(\F_q)$ when $n$ is even and a surjection from $U_1(\F_{q^n})$
to $U_1(\F_q)$ when $n$ is odd.  Thus, 
$$|\IU_n(q)| = \frac {\sum_{d\mid n} \mu(n/d) (q^d - (-1)^d)}{n};$$
this coincides with $P_n(q)$ when $n>2$ but equals $P_1(q)+1$ for $n=1$ and $P_2(q)-1$
for $n=2$.
Moreover, every fiber of the map from $\IU_n(q)$  to $\ker\F_{q^2}^\times \to \F_q^\times$ has the same cardinality.  The argument now goes through exactly as for $\L_n(q)$.
\end{proof}

\begin{prop}
\label{SOn-prop}
Let $\O_{2n}(q)$ denote the set of monic polynomials $P(x)\in \F_q[x]$ of degree $2n$ such that
$P(x) = x^{2n} P(1/x)$.
For all $k > 0$, there exists $N$ such that if $n\ge 2$ and
$q$ is a prime power, then the set of $P(x)\in \O_{2n}(q)$ such that
$P(x)$ splits into more than than $N\log n$ irreducible factors in $\F_q[x]$
has at most $n^{-k}|\O_{2n}(q)|$ elements. 
\end{prop}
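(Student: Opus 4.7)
The plan is to follow the template of Propositions~\ref{SLn-prop} and \ref{SUn-prop}, adapted to the action of the group $\langle \Frob_q, \iota\rangle$ generated by the Frobenius and inversion $\iota\colon x\mapsto x^{-1}$. Elements of $\O_{2n}(q)$ are in bijection with $2n$-element multisets in $\bar\F_q^\times$ that are stable under both $\Frob_q$ and $\iota$ and whose element-product equals $1$ (the last condition coming from monicity). Let $\IO_n(q)$ denote the set of minimal such multisets of size $n$. Then $|\IO_1(q)|=2$ (the singletons $\{1\}$ and $\{-1\}$), $|\IO_n(q)|=0$ for odd $n\ge 3$, and for $n=2k$ the blocks come in two types: Frobenius orbits of size $2k$ closed under inversion (so $\alpha^{q^k+1}=1$ with $\F_q(\alpha)=\F_{q^{2k}}$), contributing $\approx q^k/(2k)$ blocks, and unordered pairs $\{O,\iota O\}$ of distinct Frobenius orbits of size $k$, contributing another $\approx q^k/(2k)$ blocks. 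Hence $e_k:=|\IO_{2k}(q)|$ satisfies inequality~(\ref{e-bound}).

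A feature new to the orthogonal case is that each block of the second type corresponds to \emph{two} irreducible factors of $P$ in $\F_q[x]$ (a reciprocal pair), whereas each block of the first type contributes only one. Thus if $P$ has $K$ blocks counted with multiplicity, then its number of irreducible factors lies between $K$ and $2K$, and it suffices to bound the fraction of $P\in\O_{2n}(q)$ with $K>m/2$. A second observation is that the element-product of every block equals $+1$ except for $B_{-1}=\{-1\}$, so the global product-$1$ constraint is equivalent to requiring $B_{-1}$ to have even multiplicity (this plays the role of the character sum used in Propositions~\ref{SLn-prop} and~\ref{SUn-prop}, but over $\mathbb Z/2\mathbb Z$). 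The resulting generating function for degree versus block number is
$$\tilde F(x,z)=\frac{1}{(1-xz)(1-x^2z^2)}\prod_{k\ge 1}(1-x^{2k}z)^{-e_k},$$
whose $[x^{2n}z^K]$-coefficient equals the number of $P\in\O_{2n}(q)$ having exactly $K$ blocks.

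For coefficient extraction, I substitute $X=x^2$ in the infinite product to obtain $\tilde B(X,z):=\prod_k(1-X^kz)^{-e_k}$, to which Proposition~\ref{y-power-bound} applies with constants $C$ and $D>1$. The resulting convolution identity
$$[x^{2n}z^K]\tilde F=\sum_{a=0}^{n}(a+1)\,c_{n-a,\,K-2a},\qquad c_{n',K'}:=[X^{n'}z^{K'}]\tilde B,$$
reduces the tail sum over $K>m/2$ to two regimes: for $a<m/4$ I apply Proposition~\ref{y-power-bound} with $m'=m/2-2a$, and for $a\ge m/4$ I bound the whole row via Proposition~\ref{euler} and exploit the $q^{n-a}$ decay. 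This produces a numerator of order $\mathrm{poly}(n)\cdot q^n\cdot\max(D^{-m/2},q^{-m/4})$, to be compared with the denominator $[x^{2n}]\tilde F(x,1)\ge n^{A_1}q^n$ coming from Proposition~\ref{euler}. Choosing $N$ sufficiently large in terms of $k$, $C$, and $D$ then yields the desired $n^{-k}$ bound. The main technical obstacle is the convolution bookkeeping, namely verifying that the prefactor $(1-xz)^{-1}(1-x^2z^2)^{-1}$ does not inflate the tail ratio; this reduces to showing that $\sum_a(a+1)(D^2/q)^a$ contributes at most a polynomial factor, a fact that holds uniformly for every prime power $q\ge 2$.
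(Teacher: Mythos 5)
Your proposal is correct and follows the same underlying strategy as the paper: classify the minimal multisets stable under Frobenius and inversion, build a two-variable generating function tracking degree and block count, feed the exponent sequence into Propositions~\ref{euler} and~\ref{y-power-bound}, and finish by noting that each block contributes at most two irreducible factors. The one genuine difference is in how the singletons and the product-one constraint are handled. You keep $\{1\}$ and $\{-1\}$ as size-$1$ blocks and encode the parity constraints with a prefactor $\frac{1}{(1-xz)(1-x^2z^2)}$, which forces you to run a convolution argument on top of Proposition~\ref{y-power-bound}. The paper instead enlarges $\IO_2(q)$ to $\IO^+_2(q)$ by adjoining the multisets $\{1,1\}$ and $\{-1,-1\}$, which makes $|\IO^+_{2n}(q)| = P_n(q)$ for all $n\ge 1$, makes the product-one condition automatic on each block, and yields the clean generating function $\prod_{n\ge1}(1-x^n y)^{-P_n(q)}$ to which Proposition~\ref{y-power-bound} applies with no extra bookkeeping. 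Your version therefore carries more technical overhead; it also has a small imprecision for $q$ even, where $\{1\}=\{-1\}$ and the correct prefactor is $\frac{1}{1-x^2z^2}$ (your $\tilde F$ then strictly overcounts, which does not break the final ratio estimate once you compare against $|\O_{2n}(q)|=q^n$ directly, but it should be stated). Both routes reach the conclusion; the paper's $\IO^+$ device is the cleaner one and is worth adopting.
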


\begin{proof}
The elements of $\O_{2n}(q)$ are in bijective
correspondence with Frobenius-stable multisets in $\bar\F_q$ of cardinality $2n$
which are invariant under $x\mapsto x^{-1}$ and have product $1$.  

Let $\IO_m(q)$ denote the set
of  subsets of $\bar\F_q$ of cardinality $m$
which are stable under Frobenius and the map $x\mapsto x^{-1}$ and which are minimal among sets having this stability.
The elements of $\IO_m(q)$ with one element are $\{1\}$ and $\{-1\}$, which are distinct
if and only if $q$ is odd. 
Suppose $S\in \IO_m(q)$ has at least two elements, and let $\alpha\in S$.  
Now suppose $\alpha^{-1} = \alpha^{q^k}$ for some $k>0$.  We choose $k$ to be minimal, so 
$$S = \{\alpha,\alpha^q,\ldots,\alpha^{q^k} = \alpha^{-1},\alpha^{-q},\ldots,\alpha^{-q^{k-1}}\},$$
and $m = 2k$.  If no such $k$ exists, then,
or 
$$S = \{\alpha,\alpha^q,\ldots,\alpha^{q^{k-1}}\} 
\coprod \{\alpha^{-1},\alpha^{-q},\ldots,\alpha^{-q^{k-1}}\},$$
and again $m=2k$.  We conclude that
$$|\IO_m(q)| 
= \begin{cases}
2&\text{if $m=1$ and $q$ is odd,}\\
1&\text{if $m=1$ and $q$ is even,}\\
q-2&\text{if $m=2$ and $q$ is odd.}\\
q-1&\text{if $m=2$ and $q$ is even.}\\
0&\text{if $m\ge 3$ is odd,}\\
P_n(q) &\text{if $m=2n\ge 4$.}\\
\end{cases}$$

Let $\IO^+_{2n}(q)=\IO_{2n}(q)$ if $n\ge 2$ and
$$\IO^+_2(q) = \IO_{2}(q)\cup\{\{1,1\},\{-1,-1\}\},$$
where $\{x,x\}$ denotes the multiset whose unique element $x$ has multiplicity $2$.
In particular, $|\IO_{2n}(q)| = P_n(q)$
for all $n\ge 1$, and
every element in $\O_{2n}(q)$ decomposes uniquely as a sum of elements in
$\IO^+_{2n_i}(q)$ with $\sum n_i = n$.  
If $c_{n,m}$ denotes the number of elements in $\O_{2n}(q)$ whose root multiset
decomposes into exactly $m$ elements of $\IO^+_{2n_i}$, then
$$\sum_{m,n} c_{n,m}x^n y^m = \prod_{n=1}^\infty (1-x^n y)^{-P_n(q)},$$
so
$$\frac{\sum_{i=m+1}^n c_{n,i}}{\sum_{i=0}^n c_{n,i}} < n^C D^{-m}$$
where $C$ and $D>1$ are absolute constants, and it follows that the proportion of
such elements is $<n^{-k}$ if $N$ is sufficiently large.  Finally, if $P(x)$ has at least $m$
irreducible factors, its root multiset must decompose into at least $m/2$
elements of $\IO^+_{2n_i}(q)$, and the proposition follows.

\end{proof}

\section{Blocks of Typical Elements in Classical Groups}

\begin{defn}
A \emph{classical group} $\uG$ over a finite field is a group of type 
$\SL_n$, $\SU_n$, $\SO_{2n}^{\pm}$, $\Sp_{2n}$, or $\SO_{2n+1}$.
\end{defn}

The classical groups admit natural injective representations of degree
$n$, $n$, $2n$, $2n$, $2n$, and $2n+1$ respectively.  We identify
elements of classical groups with their images under these natural representations,
so that, e.g., we can speak of the eigenvalues of an element
$g\in \uG(\F_q)$.  These eigenvalues form a multiset $\Spec\:g$ which is invariant
under $x\mapsto x^q$ in the first case, $x\mapsto x^{-q}$ in the second case, and
both in the remaining four cases.  In the last case, $1$ is an eigenvalue of odd multiplicity,
and we define $\Spec_0 g$ to be the spectrum less one copy of $1$.  
For each $g$,  the characteristic polynomial of $g$ (divided by $x-1$ in the odd-orthogonal case) admits a unique decomposition into \emph{factors}, i.e., 
elements of $\IL_*(q)$ for $\uG = \SL_n$, $\IU_*(q)$ for $\uG= \SU_n$, or $\IO_{2*}(q)$ for $\uG$ self-dual
which corresponds to the decomposition of $\Spec\:g$ (or $\Spec_0 g$ in the odd-orthogonal case) into \emph{orbits}.
The total number of factors will be denoted $\Fact g$.

\begin{prop}
\label{centralizer}
There exist absolute constants $A>0$ and $B$ such that for every finite field $\F_q$,
every classical group $\uG/\F_q$ and
semisimple element $g\in \uG(\F_q)$, we have
$$|C_{\uG(\F_q)}(g)| > A (\Fact g)^B q^{\dim C_{\uG}(g)}.$$
\end{prop}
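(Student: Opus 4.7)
The plan is to write $|C_{\uG(\F_q)}(g)|/q^{\dim C_{\uG}(g)}$ as a finite product of factors of the form $1 - \epsilon q^{-d}$ with $d\ge 1$, and then to bound this product from below by controlling how many such factors with small $d$ can appear. The input is the limited supply of irreducibles of each small degree, the same mechanism that drove Propositions~\ref{SLn-prop}--\ref{SOn-prop}.

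First, I would recall the structure of the centralizer of a semisimple element in each classical type. If the decomposition of the characteristic polynomial (modulo $x-1$ in the odd-orthogonal case) assigns multiplicity $m_j$ to the $j$th distinct factor of degree $n_j$, then $C_{\uG}(g)$ is, up to bounded central index, a product $\prod_j H_j$, where each $H_j$ is a classical group of rank $m_j$ over an extension of $\F_q$: $\GL_{m_j}(\F_{q^{n_j}})$ in the $\SL$ case and for $\IO$-orbits that split into two inverse Frobenius orbits; $\mathrm{U}_{m_j}(\F_{q^{n_j}})$ in the $\SU$ case and for self-inverse $\IO$-orbits; and a symplectic or orthogonal group of rank $m_j$ for the $\pm 1$-eigenvalue blocks in the self-dual types. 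The passage from $\GL/\mathrm{U}$ to $\SL/\SU$ (or through the finite component group) changes $|C|$ by a bounded factor, absorbed into $A$. The Euler-product formula for each $H_j$ then gives
\[
\frac{|H_j|}{q^{\dim H_j}} \;=\; \prod_{i=1}^{m_j}\bigl(1 - \epsilon_{j,i}\, q^{-d_{j,i}}\bigr),
\]
with $\epsilon_{j,i}\in\{\pm 1\}$ and each $d_{j,i} \ge n_j \ge 1$.

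The analytic core is then to apply $\log(1-x)\ge -2x$ for $0\le x\le 1/2$ and sum the geometric series in $i$:
\[
\log \frac{|C_{\uG(\F_q)}(g)|}{q^{\dim C_{\uG}(g)}} \;\ge\; -2 \sum_{j,i} q^{-d_{j,i}} \;\ge\; -4 \sum_j q^{-n_j}.
\]
Letting $N_k$ be the number of distinct factors of degree $k$, one has $N_k \le P_k(q)\lesssim q^k/k$ (only so many irreducibles exist, by Lemma~\ref{identity}) and $\sum_k N_k \le \Fact g$ (each distinct factor is counted at least once). Hence
\[
\sum_j q^{-n_j} \;=\; \sum_k N_k\, q^{-k} \;\le\; \sum_k \min\!\bigl(\tfrac{1}{k},\; \Fact g\cdot q^{-k}\bigr),
\]
and splitting at $k\approx \log_q(\Fact g)$ bounds this by $O(\log\log(\Fact g + 2))$. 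Exponentiating gives $|C_{\uG(\F_q)}(g)|/q^{\dim C_{\uG}(g)} \gg (\log(\Fact g +2))^{-C}$ for an absolute constant $C$, which dominates $A(\Fact g)^B$ for any fixed $B<0$ and a suitable $A>0$.

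The main obstacle I expect is the case-by-case bookkeeping in the first step, particularly for the self-dual classical groups: one must handle the three kinds of $\IO$-orbit separately (pairs of inverse Frobenius orbits, self-inverse Frobenius orbits, and the exceptional $\{\pm 1\}$ blocks) and check that each contributes a block of the claimed product form with $d_{j,i}\ge n_j$. The analytic part of the argument is otherwise uniform across the five classical types.
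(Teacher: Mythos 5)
Your proposal is correct and takes essentially the same approach as the paper: decompose the centralizer of a semisimple element as a product of classical groups over extension fields (the paper writes this out explicitly only for $\SL_n$, reducing to $\prod_{i,j}|\GL_j(\F_{q^i})|^{c_{i,j}}$, and remarks that the other types are handled similarly), express $|C_{\uG(\F_q)}(g)|/q^{\dim C_{\uG}(g)}$ as an Euler product, apply $\log(1-x)\ge -2x$, and sum geometric series to reduce everything to bounding $\sum_k N_k q^{-k}$, where $N_k$ counts factors of degree $k$. The only difference is in that final estimate: the paper uses the crude $N_k < q^k$ to obtain $\sum_k N_k q^{-k} \le 1 + \log_q(\Fact g)$ and hence the explicit exponent $B = -4/\log 2$, whereas you use the sharper harmonic bound $N_k \lesssim q^k/k$ to get $O(\log\log(\Fact g + 2))$; this yields a marginally stronger lower bound on the centralizer, but the extra precision is not needed for the proposition as stated.
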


\begin{proof}
Consider first $\uG = \SL_n$.  Let $c_{i,j}$ denote the number of Galois orbits of eigenvalues
of $g$ of orbit size $i$ and multiplicity $j$.  Setting $C_i := \sum_j c_{i,j}$, we have
\begin{align*}|C_{\uG(\F_q)}(g)| &= \frac{\prod_{i,j}|\GL_j(\F_{q^i})|^{c_{i,j}}}{q-1}
> q^{\dim C_{\uG}(g)}\prod_{i,j} \prod_{k=1}^\infty (1-q^{-ki})^{c_{i,j}} \\
& = q^{\dim C_{\uG}(g)}\prod_i \prod_{k=1}^\infty (1-q^{-ki})^{C_i}.\\
\end{align*}
for $0<x<1/2$, 
$$\log(1-x) = -\sum_{k=1}^\infty x^k / k >  -\sum_{k=1}^\infty x^k > -2x,$$
so
$$\prod_{k=1}^\infty (1-q^{-ki}) > e^{-2q^{-i}-2q^{-2i}-\cdots} > e^{-4q^{-i}}.$$
Thus,
$$|C_{\uG(\F_q)}(g)| > q^{\dim C_{\uG}(g)} e^{-4\sum C_i q^{-i}}.$$
As $C_i < q^i$ for each $i$ and $\sum_i C_i = \Fact g$, we have
$$\sum_{i=1}^\infty C_i q^{-i} <1 + \lfloor \log_q \Fact g\rfloor < 1+\frac{\log \Fact g}{\log 2}.$$
Thus,
$$e^{-4\sum C_i q^{-i}} > e^{-4} (\Fact g)^{4/\log 2},$$
which gives the proposition in this case.
The other classical groups can be treated in the same way.

\end{proof}

\begin{prop}
\label{ss-cc}
There exist absolute constants $A$ and $B$ such that for every finite field $\F_q$,
every classical group $\uG/\F_q$ of dimension $d$ and rank $r$, and every
semisimple element $g\in \uG(\F_q)$, the set of elements in $\uG(\F_q)$
whose semisimple part is conjugate to $g$ has cardinality at most
$$A (\Fact g)^B q^{d-r}.$$
\end{prop}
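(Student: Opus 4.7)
The plan is to use the Jordan decomposition to factor the count as a product of two familiar group-theoretic quantities, then to estimate each by combining the standard order formula, Proposition~\ref{centralizer}, and Steinberg's formula for the number of unipotent elements in a connected reductive group.

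First, every $x\in\uG(\F_q)$ whose semisimple part is conjugate to $g$ factors uniquely as $x=g'u$, where $g'$ lies in the $\uG(\F_q)$-conjugacy class of $g$ and $u$ is a unipotent element of $C_{\uG(\F_q)}(g')$. Since the centralizers $C_{\uG(\F_q)}(g')$ are all conjugate, they contain the same number of unipotent elements, so the cardinality in question equals
$$\frac{|\uG(\F_q)|}{|C_{\uG(\F_q)}(g)|}\cdot |C_{\uG(\F_q)}(g)^{\mathrm{uni}}|.$$

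I would then bound the three factors separately. The order of the group satisfies $|\uG(\F_q)|=O(q^d)$ by the standard order formulas for classical groups. Proposition~\ref{centralizer} gives
$$|C_{\uG(\F_q)}(g)|\ge A'\,(\Fact g)^{B'}\, q^{\dim C_{\uG}(g)}$$
for absolute constants $A', B' > 0$. For the unipotent count, one uses that the unipotent elements of any linear algebraic group lie in its identity component, so they all belong to $C_{\uG}(g)^0(\F_q)$; since $g$ is semisimple, $C_{\uG}(g)^0$ is connected reductive of dimension $\dim C_{\uG}(g)$ and of rank $r$ (the rank of $\uG$, inherited from any maximal torus of $\uG$ through $g$), so Steinberg's formula yields
$$|C_{\uG(\F_q)}(g)^{\mathrm{uni}}| = q^{\dim C_{\uG}(g) - r}.$$

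Combining the three estimates, the factors of $q^{\dim C_{\uG}(g)}$ cancel and one is left with
$$O\!\left(\frac{q^{d-r}}{(\Fact g)^{B'}}\right)\le A\, q^{d-r},$$
since $\Fact g \ge 1$; this proves the proposition (in fact with $B=0$, and \emph{a fortiori} with any positive $B$). The only step requiring real care is the appeal to Steinberg's formula for $C_{\uG(\F_q)}(g)$: one must verify that $C_{\uG}(g)^0$ is connected reductive of the stated dimension and rank, and identify its $\F_q$-rational unipotents with those of $C_{\uG(\F_q)}(g)$. For the classical groups under consideration this is transparent from the explicit block description of the centralizer as a product of general linear, unitary, symplectic, and orthogonal groups over extensions of $\F_q$, so no genuine obstacle arises.
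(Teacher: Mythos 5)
Your decomposition via Jordan form, the count of conjugates times unipotent elements, and the appeal to Steinberg's formula constitute exactly the paper's argument. The one substantive error is the assertion that Proposition~\ref{centralizer} holds with a \emph{positive} constant $B'$. In fact the constant $B$ there is negative: the proof of that proposition yields
$$e^{-4\sum_i C_i q^{-i}} > e^{-4}(\Fact g)^{-4/\log 2},$$
i.e.\ the exponent is $-4/\log 2 < 0$ (the paper's displayed formula has a sign slip, but since the left-hand side is less than $1$ while the right-hand side would be unbounded in $\Fact g$, the exponent cannot be positive). The negativity is genuine: a semisimple element whose characteristic polynomial has many irreducible factors has a torus-heavy centralizer whose $\F_q$-point count falls below $q^{\dim C_{\uG}(g)}$ by a factor that is not absolutely bounded, and the proof of Proposition~\ref{centralizer} controls this deficit only as a power of $\Fact g$. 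Consequently your parenthetical remark that Proposition~\ref{ss-cc} holds with $B=0$ is false; the $(\Fact g)^B$ factor with $B>0$ is actually needed, arising in your own computation as $(\Fact g)^{-B'}$ with $B'<0$. Once the sign is corrected, the rest of your argument is complete and reproduces the paper's proof.
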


\begin{proof}
To count elements with Jordan decomposition $su$ such that $s$ is conjugate to $g$,
we first count conjugates $s$ of $g$, and then count the number of unipotent elements in
each $C_G(s)$ (which does not depend on the choice of conjugate $s$).
By a standard estimate, 
$$ c_1 q^{\dim \uG} < |\uG(\F_q)| < c_2 q^{\dim \uG}$$
for some non-zero constants $c_1$ and $c_2$ which do not depend on $\uG$.
By Proposition~\ref{centralizer},
$$|g^{\uG(\F_q)}| = \frac{|\uG(\F_q)|}{|C_{\uG(\F_q)}(g)|} 
< c_3  (\Fact g)^{-B} q^{d-\dim C_{\uG}(g)}.$$
Every unipotent element of $C_{\uG(\F_q)}(g)$ lies in the identity component of
the reductive algebraic group $C_{\uG}(g)$, so
by a theorem of Steinberg \cite[Corollary 15.3]{St}, the number of unipotent elements in $C_{\uG(\F_q)}(g)$
is exactly $q^{\dim C_{\uG}(g) - r}$.  The proposition follows.
\end{proof}

\begin{prop}
\label{few-factors}
For all $\epsilon>0$, there exists $k$ such that for every finite field $\F_q$ and classical group $\uG/\F_q$, there exists a subset $S\subset \uG(\F_q)$ with
$|S|\le \epsilon |\uG(\F_q)|$, such that for all $g\not \in S$, $\Fact g < k\log n$.
\end{prop}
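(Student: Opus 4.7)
The plan is to combine the characteristic-polynomial estimates of Section~2 with the element-counting bound of Proposition~\ref{ss-cc}. For a classical group $\uG$ of rank $r$ and natural representation dimension comparable to $n$, the characteristic polynomial of any $g\in \uG(\F_q)$ (divided by $x-1$ in the odd-orthogonal case) lies in one of $\L_n(q)$, $\U_n(q)$, or $\O_{2n}(q)$; in particular the symplectic and odd-orthogonal cases produce self-reciprocal polynomials handled by Proposition~\ref{SOn-prop}. Thus for any prescribed $k'>0$, Propositions~\ref{SLn-prop}--\ref{SOn-prop} provide an $N$ such that at most a proportion $n^{-k'}$ of characteristic polynomials split into more than $N\log n$ irreducible factors.

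I would then pass from polynomials to group elements. The fibers of the characteristic-polynomial map on semisimple conjugacy classes are polynomially bounded in $n$: for $\SL_n$ and $\SU_n$ each fiber has size at most $\gcd(n, q\mp 1)\le n$, while for the symplectic and orthogonal groups the fibers have bounded size, since splitting of semisimple classes occurs only at the $\pm 1$ eigenspaces. Proposition~\ref{ss-cc} then bounds each such semisimple class by $A(\Fact g)^B q^{d-r}\le An^B q^{d-r}$ elements, with $d=\dim\uG$. Since the number of characteristic polynomials is of order $q^r$ for each classical type, and at most $n^{-k'}$ of them are bad, the total number of bad elements is at most
$$O\!\bigl(n\cdot n^{-k'}\cdot q^r\cdot n^B q^{d-r}\bigr) = O\!\bigl(n^{B+1-k'}\, q^d\bigr).$$
As $|\uG(\F_q)|\asymp q^d$, this is at most $\epsilon|\uG(\F_q)|$ provided $k'$ is chosen large enough and $n$ is sufficiently large. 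The bounded-$n$ regime is trivial, since $\Fact g$ is then bounded by the natural representation dimension, and one simply enlarges the final constant $k=N$ so that $k\log n$ exceeds this uniform bound.

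The main obstacle is the polynomial bound on the fibers of the characteristic-polynomial map on semisimple classes. This reduces to standard structure results on centralizers of semisimple elements in classical groups: such a centralizer is a product of smaller classical groups, one for each self-conjugate Galois orbit of eigenvalues, and the combinatorial data distinguishing conjugacy classes within a single characteristic polynomial (essentially Witt-type invariants of the restricted forms on eigenspaces) takes at most polynomially many values in $n$.
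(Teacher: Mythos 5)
Your proposal follows the same route as the paper's (very terse) proof, which simply cites Propositions~\ref{SLn-prop}--\ref{SOn-prop}, Proposition~\ref{ss-cc}, and the trivial bound $\Fact g\le n$; you have filled in the intermediate counting step — passing from bad characteristic polynomials to bad semisimple classes to bad elements via the $A(\Fact g)^Bq^{d-r}$ bound — which the paper leaves implicit. The only small soft spot is your remark on the fibers of the characteristic-polynomial map on semisimple classes: for the simply connected types $\SL_n$, $\SU_n$, $\Sp_{2n}$ the fiber is actually a single class (centralizers of semisimple elements in simply connected groups are connected), and for $\SO_m$ it is bounded by the order of the fundamental group; your weaker polynomial bound $O(n)$ is harmless here since any polynomial loss is absorbed by choosing $k'$ larger, but the justification you sketch for it is looser than needed. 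One should also note explicitly, as you implicitly do, that $\Fact g$ depends only on the semisimple part $g_s$ (they have the same characteristic polynomial) so the bad set really is a union of "Jordan packets" to which Proposition~\ref{ss-cc} applies, and that in the self-dual case one uses that the number of $\IO$-factors is at most the number of irreducible factors, so the estimate of Proposition~\ref{SOn-prop} suffices. With those clarifications your argument matches the intended one.
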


\begin{proof}
This follows immediately from Proposition~\ref{ss-cc}, the trivial bound $\Fact g \le n$,
and the bounds given by Propositions~\ref{SLn-prop}, \ref{SUn-prop}, 
and \ref{SOn-prop}
in the cases $\uG = \SL_n$, $\uG = \SU_n$, and $\uG$ is self-dual, respectively.
\end{proof}

\begin{thm}
\label{power-fibers}
For all positive integers $m$ and every $\epsilon > 0$, there exists $l$ such that for every classical group $\uG/\F_q$
there exists $S\subset \uG(\F_q)$ with
$|S|\le \epsilon |\uG(\F_q)|$, such that for all $g\not\in S$, 
$$|\{h\in \uG(\F_q)\mid h^m = g^m\}| < q^{l \log^4 n}.$$
\end{thm}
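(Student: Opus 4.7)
My plan is to combine Proposition~\ref{few-factors} with a block-by-block analysis of the centralizer $C := C_{\uG}(z)$ of $z := g^m$, bounding the $m$-th power fiber on each block while controlling certain parameters of $g$ for typical $g$.

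Apply Proposition~\ref{few-factors} with $\epsilon/2$ to produce $S_0 \subset \uG(\F_q)$ of measure at most $\epsilon/2$ with $\Fact g \le k\log n$ outside $S_0$, where $k = k(\epsilon)$. Since the $m$-th power map on $\bar\F_q^\times$ is Frobenius-equivariant and only merges Galois orbits, $\Fact z \le k\log n$ as well. Every $h$ with $h^m = z$ commutes with $z$ and hence lies in $C(\F_q)$. The identity component $C^\circ$ decomposes as a product over the Galois orbits of eigenvalues of $z$: the $i$-th block is $\GL_{\mu_i}(\F_{q^{d_i}})$ (or the corresponding unitary, orthogonal, or symplectic analogue), with $d_i$ the orbit size and $\mu_i$ the multiplicity, and $z$ acts on this block as the scalar $\lambda_i\cdot I$. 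The condition $h^m = z$ then splits blockwise as $h_i^m = \lambda_i I_{\mu_i}$. For $p \nmid m$ each $h_i$ is semisimple with eigenvalues among the $m$-th roots of $\lambda_i$; the number of $\Frob_{q^{d_i}}$-stable $\mu_i$-multisets of such roots is at most $O_m(\mu_i^{m-1})$, and each associated conjugacy class has size at most $q^{d_i\mu_i(\mu_i - 1)}$ (its centralizer contains a maximal torus of $\GL_{\mu_i}(\F_{q^{d_i}})$). The $p \mid m$ case contributes an additional unipotent factor of size at most $q^{O(\dim C^\circ - \mathrm{rank}\,C^\circ)}$ via Steinberg's unipotent count. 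Multiplying over the $K \le k\log n$ blocks,
$$\log_q|\{h : h^m = z\}| \le O_m(\log^2 n) + O\bigl(\dim C^\circ(z) - \mathrm{rank}\,C^\circ(z)\bigr).$$

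The main obstacle is controlling the excess $E := \dim C^\circ(z) - \mathrm{rank}\,C^\circ(z) = \sum_i d_i\mu_i(\mu_i - 1)$ for $g$ outside a second exceptional set of measure $\le \epsilon/2$. This excess arises from inherited multiplicities in $g_s$ together with coincidences $\lambda_i^m = \sigma(\lambda_j^m)$ between distinct Galois orbits of $g_s$. For the inherited part, I separately bound the defect $\delta := n - \sum d_i$ and the maximum multiplicity $\max_i \mu_i$ using Proposition~\ref{ss-cc}: since the number of shapes $(d_i,\mu_i)$ of $g_s$ with $K \le k\log n$ parts summing to $n$ is at most $e^{O(\log^2 n)}$, the measure of $g$ with $\delta \ge c_1\log^2 n$ is at most $A(\log n)^B e^{O(\log^2 n)}q^{-\delta}$, which is $\le \epsilon/8$ for large $c_1$; a similar argument bounds $\max_i\mu_i \le c_2\log n$ outside another $\epsilon/8$-set, giving inherited excess $\le (\max_i\mu_i)\cdot\delta = O(\log^3 n)$. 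For coincidences, each pair of size-$d$ Galois orbits of $g_s$ collides under the $m$-th power with probability $O(md/q^d)$, contributing $2d$ to $E$; the total expected contribution is $O_m(K^2\sum_d d^2/q^d) = O_m(\log^2 n)$ uniformly in $q \ge 2$ (the series $\sum d^2/q^d$ being bounded), and Markov's inequality excludes a set of measure $\epsilon/4$ on which this exceeds $l_0\log^4 n$. Hence $E = O(\log^3 n)$ outside $S$, and choosing $l = l(m,\epsilon)$ large enough gives the theorem.
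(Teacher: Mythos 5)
Your block-by-block analysis of $C_{\uG}(z)$ for $z=g^m$ is in the same spirit as the paper's approach (the paper instead splits the natural module as $V_1\oplus V_2$ with $\dim V_2=O(\log^2 n)$, $g^m|_{V_1}$ regular semisimple, and separately counts $m$th roots on the two pieces), but your decomposition of the excess $E=\sum_i d_i\mu_i(\mu_i-1)$ into ``inherited multiplicities'' plus ``coincidences between distinct Galois orbits'' misses a third, essential source of excess: a \emph{single} Galois orbit of $g_s$ can shrink under the $m$th power map. If $X\subset\bar\F_q^\times$ is a Frobenius orbit of size $d$ with multiplicity~$1$ in $g_s$ and $\alpha\in X$ satisfies $(\alpha^{q^j})^m=\alpha^m$ for some $0<j<d$, then $X^m$ has size $d'<d$ and every eigenvalue of $z=g_s^m$ in $X^m$ acquires multiplicity $d/d'\ge 2$. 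This makes $\delta(g_s)=0$ (so your ``inherited'' term vanishes) and involves no pair of distinct orbits (so your ``coincidence'' term vanishes), yet contributes $d'\cdot(d/d')\cdot(d/d'-1)\ge d-d'$ to $E$, which can be of order $n$. The paper explicitly excludes such $g$: in the proof of Theorem~\ref{power-fibers} it removes those whose characteristic polynomial has a factor of degree $\gg\log n$ with two distinct roots whose ratio is an $m$th root of unity, using the count $O(mnq^{n/2})$ of irreducible factors with this self-collision property. Your plan has no analogous step, so the bound $E=O(\log^3 n)$ does not follow.

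A second, lesser issue is that the ``coincidence'' step is asserted probabilistically (``each pair of size-$d$ Galois orbits of $g_s$ collides under the $m$-th power with probability $O(md/q^d)$'') without specifying the probability space. Conditioning on a shape $(d_i,\mu_i)_i$ does not make the orbits independent uniform choices, and in any case such an estimate would need to be derived by the sort of direct polynomial counting the paper carries out via Propositions~\ref{SLn-prop}--\ref{SOn-prop} and~\ref{ss-cc}; as written it is not a proof. Your treatment of the $\delta$ and $\max_i\mu_i$ tails via Proposition~\ref{ss-cc} and a union bound over shapes is essentially sound and could be made rigorous, but without the within-orbit merge case and a rigorous replacement for the heuristic collision estimate, the argument has a real gap.
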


\begin{proof}
Whether we are in the $\IL$, the $\IU$, or the $\IO$ setting,
if $X\subset \bar\F_q^\times$ is an orbit such that
$\alpha^m=\beta^m$ implies $\alpha=\beta$ for all $\alpha,\beta\in X$, 
then 
$$X_m := X \coprod \zeta_m X \coprod\cdots\coprod \zeta_m^{m-1} X$$
decomposes into a disjoint union of
at most $m$ orbits, and each orbit has cardinality $\ge |X|$.  Indeed, each such orbit $Y$ is contained in $X_m$ and
maps onto $X$ by the map sending $\alpha\in Y$ to the unique $\beta\in X$
such that $\alpha^m=\beta^m$.
It follows that 
the number of elements $\IL_n(q)$, $\IU_n(q)$, or $\IO_n(q)$ respectively
with a root in the set $X_m$ is at most $O(mq^{n-|X|})$.

Moreover, whether we are in the $\IL$, the $\IU$, or the $\IO$ setting, if $\alpha$ and $\zeta_m^i \alpha$
are distinct elements of the same orbit $X$ of size $k$, then without loss of generality we may assume that 
$$\zeta_m^i\alpha = \alpha^{(\pm q)^j}$$
for some $j\le k/2$.  The number of such elements for a given $i$ is $\le n(q^{k/2}+1)$,
so the number of elements in $\IL_n(q)$, $\IU_n(q)$, or $\IO_n(q)$ which have two distinct roots whose ratio is an $m$th root of unity is $O(mnq^{n/2})$.

By removing $\epsilon |\uG(\F_q)|$ elements from $\uG(\F_q)$ we can assume by Proposition~\ref{few-factors} 
that for all remaining elements $g$, we have $\Fact g = O(\log n)$.  
By omitting all elements whose characteristic polynomial has a  factor of degree $\gg \log n$with two distinct roots whose ratio is 
an $m$th root of unity or two factors of degree $\gg \log n$  with roots whose ratio is an $m$th root of unity, 
we may further assume that there are at most $O(\log n)$ eigenvalues of $g^m$ with multiplicity $\ge 1$
and each such eigenvalue has multiplicity $O(\log n)$.

To bound the order of the set of $m$th roots of $g^m$ in $\uG(\F_q)$ we embed this group in a larger group $G$, which is 
defined to be $\GL_n(\F_q)$,
$\GL_n(\F_{q^2})$, $\GL_{2n}(\F_q)$, $\GL_{2n}(\F_q)$, or $\GL_{2n+1}(\F_q)$, depending on whether 
$\uG$ is linear, unitary, even-orthogonal, symplectic, or odd-orthogonal, and bound the number of $m$th
roots of $g^m$ in $G$.  Since $h$ commutes with $g^m=h^m$, all of the $m$th roots $h$ lie
in $C_G(g^m)$.   We can decompose the natural module on which $G$ acts ($\F_q^n$, $\F_{q^2}^n$, $\F_q^{2n}$, or $\F_q^{2n+1}$)
as a direct sum of  two subspaces, $V_1$ and  $V_2$, such that the action of $g^m$ respects this decomposition, $\dim V_2 = O(\log^2 n)$,
no eigenvalue of $g^m$ acting on $V_1$ coincides with any eigenvalue of $g^m$ acting on $V_2$, $g^m$ has regular semisimple action on $V_1$,
and the number of irreducible factors (over $\F := \F_q$ or $\F := \F_{q^2}$, as the case may be) of $g^m$ acting on $V_1$ is $O(\log n)$.
Thus, every $m$th root $h$ of $g^m$ respects the decomposition $V_1\oplus V_2$, and we can identify any such $h$ 
with a pair $(h_1,h_2)\in \Aut_{\F}(V_1)\times \Aut_{\F}(V_2)$.
The centralizer of $h_1^m$ in $\Aut_{\F}(V_1)$ is a product of $O(\log n)$ cyclic groups, so there are at most $m^{O(\log n)}$ possibilities for $h_1$.
There are at most $|\Aut_{\F}(V_2)| = q^{O(\log^4 n)}$ possibilities for $h_2$.  The theorem follows.

\end{proof}

\section{The Main Theorem}

In this section we prove Theorem~\ref{main}.
More precisely, we prove the following:

\begin{thm}
\label{main-precise}
Let $m_1$ and $m_2$ be fixed positive integers.  For a given group $G$, 
let $f\colon G^2\to G$
denote the map
$$f(x_1,x_2) = x_1^{m_1} x_2^{m_2}.$$
Let $\mu_G$ denote the uniform distribution on $G$ and $\mu_{G\times G}$ the
uniform distribution on $G^2$.  Then,
$$\lim_G \Vert \mu_G - f_*\mu_{G\times G}\Vert_1= 0$$
if the limit is taken over any sequence of pairwise distinct finite simple 
groups. 
\end{thm}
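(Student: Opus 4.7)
The alternating group case is handled by Theorem~1.18 of \cite{LaSh1}, so assume $G$ is a finite simple group of Lie type. Since the map $x\mapsto x^m$ is conjugation-equivariant, the conditional distribution of $x^m$ given $x^m\in C$ is uniform on the conjugacy class $C$. Writing $p_m(C):=\Pr_x[x^m\in C]$ and $P_{C_1,C_2}$ for the distribution of the product $y_1y_2$ with $y_i$ uniform on $C_i$, one obtains
\[
f_*\mu_{G\times G}-\mu_G \;=\; \sum_{C_1,C_2}p_{m_1}(C_1)\,p_{m_2}(C_2)\,\bigl(P_{C_1,C_2}-\mu_G\bigr).
\]

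For each pair $(C_1,C_2)$, Frobenius's formula expresses $P_{C_1,C_2}$ via characters, and a routine application of Plancherel gives
\[
\|P_{C_1,C_2}-\mu_G\|_1^2 \;\le\; \sum_{\chi\ne 1}\frac{|\chi(C_1)|^2\,|\chi(C_2)|^2}{\chi(1)^2}.
\]
Fix a small $\epsilon<1/2$ and call $c\in G$ \emph{good} if $|\chi(c)|\le\chi(1)^\epsilon$ for every nontrivial $\chi\in\Irr G$; let $B\subseteq G$ be the set of bad elements, quantitatively controlled by Proposition~4.2 below. If both $C_1$ and $C_2$ consist of good elements, the right side above is at most $\sum_{\chi\ne 1}\chi(1)^{4\epsilon-2}$, which tends to $0$ by the Liebeck--Shalev zeta-function estimate $\sum_{\chi\ne 1}\chi(1)^{-s}\to 0$ (valid for any fixed $s>0$). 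Thus good pairs contribute $o(1)$ to $\|f_*\mu_{G\times G}-\mu_G\|_1$, while bad pairs contribute at most $2\bigl(\Pr_x[x^{m_1}\in B]+\Pr_x[x^{m_2}\in B]\bigr)$.

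It remains to show $\Pr_x[x^m\in B]\to 0$. This is the main obstacle, because the $m$-th power map can have enormous fibers, so $\Pr_x[x^m\in B]$ may exceed $|B|/|G|$ by a huge multiplicative factor. Precisely this is controlled by Theorem~\ref{power-fibers}: for any tolerance $\delta>0$, one may discard a subset of $x$'s of relative measure at most $\delta$ so that every remaining $x$ satisfies $|\{h:h^m=x^m\}|\le K:=q^{l\log^4 n}$, yielding
\[
\Pr_x[x^m\in B]\;\le\;\delta\;+\;K\cdot|B|/|G|.
\]
Since $K$ is only quasi-polynomial in the rank while Proposition~4.2 ensures $|B|/|G|$ decays quickly enough relative to $|G|$ (the minimum nontrivial character degree grows as a power of $|G|$ by Landazuri--Seitz), a judicious choice $\delta=\delta_G\to 0$ makes the right-hand side vanish. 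Combining this with the good-pair bound yields $\|f_*\mu_{G\times G}-\mu_G\|_1\to 0$.
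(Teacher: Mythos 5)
Your outline correctly captures the strategy the paper uses for classical groups of \emph{unbounded} rank, and the Plancherel/Cauchy--Schwarz step, the ``good vs.\ bad elements'' dichotomy, and the use of Theorem~\ref{power-fibers} to control $\Pr[x^m\in B]$ are all in the right spirit. But there are genuine gaps coming from the bounded-rank and exceptional cases, which the paper treats by a separate mechanism and which your argument does not cover.

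First, the assertion that the Liebeck--Shalev estimate $\sum_{\chi\ne 1}\chi(1)^{-s}\to 0$ holds ``for any fixed $s>0$'' is false: as the paper itself remarks (quoting \cite{LiSh3}), this fails for simple groups of Lie type of bounded rank, where there is a rank-dependent abscissa of convergence (for $\PSL_2(q)$ it is $s=1$). So the bound $\sum_{\chi\neq1}\chi(1)^{4\epsilon-2}\to0$ needs $\epsilon$ small relative to rank; this can be repaired by fixing $\epsilon$ small enough, but you should say so.

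Second, and more seriously, Theorem~\ref{power-fibers} is a statement about \emph{classical} groups only; it says nothing about exceptional groups $E_6,E_7,E_8,F_4,G_2$ or the Suzuki and Ree families. Your argument therefore does not cover those families at all. Moreover, even for bounded-rank classical groups such as $\PSL_2(q)$, Theorem~\ref{power-fibers} gives the fiber bound $K=q^{l\log^4 n}$ with $n$ bounded, i.e.\ $K$ is a fixed \emph{positive power} of $q$; meanwhile Proposition~\ref{generic} only guarantees $|B|/|G|=o(1)$ with no explicit rate. So the inequality $K\cdot|B|/|G|\to 0$ is not established for bounded rank, and the diagonalization ``choose $\delta_G\to 0$'' does not help, since shrinking $\epsilon$ in Theorem~\ref{power-fibers} inflates the constant $l$. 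The paper handles exactly this obstruction by a different route in bounded rank: Proposition~\ref{bounded-rank} gives an \emph{absolute} bound $|\chi(g^m)|\le N$ (not merely $\chi(1)^\epsilon$) outside a small normal subset, using \cite[Theorem~3]{GLL} together with a dominance argument for the power map, and then invokes $\zeta^G(1)-1\to0$ for rank $>1$; the remaining rank-one case $\PSL_2(q)$ is dispatched by an explicit trace-coordinate calculation. Your proposal needs those two additional branches to be a complete proof.
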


Here $f_*\mu_{G\times G}(X) :=  \mu_{G\times G}(f^{-1}(X))$ where
$X \subseteq G$. 

The proof uses, among other tools, the representation zeta function 
$\z^G$ of $G$, which we now define.

For a real number $s>0$ set 
\[
\z^G(s) = \sum_{\x \in \Irr G} \x(1)^{-s}.
\]

We need the following result, which is of independent interest.

\begin{prop} 
\label{generic}
Fix $\epsilon > 0$, and let $G$ be a finite simple group.
Then the probability that, for $g \in G$, $|\chi(g)| \le \chi(1)^{\e}$
for all irreducible characters $\chi$ of $G$ tends to $1$ as $|G| \go \infty$.
\end{prop}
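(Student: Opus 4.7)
The plan is to bound the \emph{bad set} $B_\epsilon := \{g \in G : |\chi(g)| > \chi(1)^\epsilon \text{ for some } \chi \in \Irr G\}$ using Chebyshev's inequality applied to each non-trivial character, combined with estimates on the representation zeta function $\z^G(s)$. Orthogonality gives $\sum_g |\chi(g)|^2 = |G|$ for every $\chi \ne 1$, so Chebyshev yields $|\{g : |\chi(g)| > \chi(1)^\epsilon\}| \le |G|\chi(1)^{-2\epsilon}$, and a union bound over non-trivial characters gives
$$
|B_\epsilon|/|G| \le \z^G(2\epsilon) - 1.
$$

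By the Liebeck--Shalev zeta function estimates (cf.\ \cite{LiSh1}), $\z^G(s) \to 1$ as $|G| \to \infty$ along finite simple groups whenever $s > 1$, and the threshold in $s$ tends to $0$ as the rank of $G$ grows. This handles $\epsilon > 1/2$ directly, and handles all $\epsilon > 0$ once the rank of $G$ is sufficiently large. Alternating groups are already absorbed by results of \cite{LaSh1}, and the finite list of sporadic groups is asymptotically irrelevant.

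The delicate case is small $\epsilon$ for Lie-type groups of bounded rank (most notably the $\PSL_2(q)$ family), where the naive union bound blows up because many characters share a common degree. Here I would exploit the high correlation between the bad events for distinct $\chi$: column orthogonality $\sum_\chi |\chi(g)|^2 = |C_G(g)|$ shows that regular semisimple elements---a proportion $1 - O(q^{-1})$ of $G$---have uniformly small character values, and Deligne--Lusztig theory refines this to bounds of the form $|\chi(g)| = O(|W|)$ for the Weyl group $W$, which is polynomial in the rank of $G$ and independent of $q$. Since non-trivial character degrees in a finite simple group grow without bound as $|G| \to \infty$, this yields $|\chi(g)| \le \chi(1)^\epsilon$ for all but a vanishingly small set of $g$.

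The main obstacle I anticipate is securing uniform Deligne--Lusztig-type bounds on $|\chi(g)|$ at regular semisimple $g$ with sufficient control over \emph{all} non-trivial $\chi$---not merely the generic semisimple ones---in the bounded-rank, small-$\epsilon$ regime. These are the refined character estimates pursued in \cite{LaShTi}, which, once imported, combine with the combinatorial ingredients of Section~3 (notably Propositions~\ref{centralizer} and \ref{ss-cc}) to finish the argument.
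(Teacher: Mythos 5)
Your overall strategy matches the paper's. The Chebyshev-plus-orthogonality argument you give recovers exactly Lemma 2.2 of \cite{Sh} (which the paper cites directly): $|B_\epsilon|/|G| \le \z^G(2\epsilon)-1$. Combining this with the Liebeck--Shalev zeta estimate --- that for any fixed $s>0$, $\z^G(s)\to 1$ as $|G|\to\infty$ except over Lie-type groups of bounded rank --- disposes of all but the bounded-rank case, just as in the paper. (Incidentally the zeta estimate you want is in \cite{LiSh3}, not \cite{LiSh1}.)

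The one genuine gap is the one you flag yourself: in the bounded-rank regime you need a uniform bound $|\chi(g)|\le N$, valid for \emph{every} irreducible $\chi$ at regular semisimple $g$, with $N$ depending only on the rank. You speculate that this should follow from Deligne--Lusztig theory as refined in \cite{LaShTi} and that the combinatorics of Propositions~\ref{centralizer} and \ref{ss-cc} would help close it. Neither is the right tool. Propositions~\ref{centralizer} and \ref{ss-cc} are centralizer-size estimates in high-rank classical groups used later to control fibers of power maps; they say nothing about character values and play no role in the bounded-rank case. What you actually want is \cite[Theorem~3]{GLL}, which supplies exactly the needed bound: for regular semisimple $g$ one has $|\chi(g)|\le N$ for all $\chi\in\Irr G$, with $N$ controlled by the Weyl group (hence by the rank). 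Once you have that citation, the bounded-rank case is immediate: a proportion $1-o(1)$ of $g\in G$ are regular semisimple, their character values are at most $N(r)$, and once $|G|$ is large the minimal nontrivial character degree exceeds $N(r)^{1/\epsilon}$, so $|\chi(g)|\le\chi(1)^\epsilon$ holds off a vanishing set.
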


\begin{proof}

By \cite{LiSh3} we have, for a fixed $s>0$, $\z^G(s) = 1+ o(1)$
unless $G$ is a finite simple group of Lie type of bounded rank.

By Lemma 2.2 of \cite{Sh}, the probability that 
$|\chi(g)| \le \chi(1)^{\e}$ for all irreducible characters $\chi$ of $G$
is at least $2-\z^G(2 \e)$. If $G$ is not of Lie type of bounded rank
then the latter expression is $1- o(1)$ as required.
If $G$ is of Lie type of bounded rank, then the probability that
$g \in G$ is regular semisimple is $1-o(1)$, and for regular 
semisimple elements $g$ there exists $N$ depending on the rank of
$G$ such that $|\x(g)| \le N$ for all $\x \in \Irr G$ \cite[Theorem~3]{GLL}.
If $G$ is large enough we have $N \le \chi(1)^{\e}$
for all $1 \ne \x \in \Irr G$, and the result follows.

\end{proof}

\begin{prop}
\label{bounded-rank}
For all positive integers $r, m$ and every $\epsilon > 0$, there exists $N$ such that for every group $G$ of Lie type of 
rank $r$ there exists a normal subset $S$ with $|S| \le \epsilon|G|$ such that  $|\chi(g^m)| \le N$ for every $g\in G\setminus S$
and every irreducible character of $G$.
\end{prop}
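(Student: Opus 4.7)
The plan is to show that $g^m$ is regular semisimple for all but a small fraction of $g \in G$, and then invoke the bound already used in the proof of Proposition~\ref{generic} that, for groups of Lie type of rank $r$, regular semisimple elements $h \in G$ satisfy $|\chi(h)| \le N_0(r)$ for all $\chi \in \Irr G$ \cite[Theorem~3]{GLL}.

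Since $r$ is fixed, the associated algebraic group $\uG$ has dimension at most some $d = d(r)$, and $|G|$ is comparable to $q^d$. The key step is to argue that the set $T \subset \uG$ of elements $g$ for which $g^m$ fails to be regular semisimple is a proper, conjugation-stable closed subvariety. Indeed, on a fixed maximal torus the regular semisimplicity of $g^m$ amounts to the nonvanishing of finitely many characters of the form $\alpha_i\alpha_j^{-1} - \zeta$ (with $i\ne j$ indexing eigenvalue characters of a fixed faithful representation of $\uG$ and $\zeta$ ranging over $m$th roots of unity), none of which vanishes identically on the torus. From $\dim T < d$ one gets $|T(\F_q)| = O(q^{d-1})$ with an implicit constant depending only on $r$ and $m$, so the proportion of $g \in G$ whose image in $\uG$ lies in $T$ is $O(1/q)$. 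Choose $q_0 = q_0(r,m,\epsilon)$ so that this proportion is $\le \epsilon$ whenever $q \ge q_0$, and let $S$ be the corresponding normal subset of $G$.

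For the finitely many $G$ of rank $r$ with $q < q_0$, I would take $S = \emptyset$ and use the trivial bound $|\chi(g^m)| \le \chi(1)$, enlarging $N$ to a constant $N_1$ absorbing these groups. Then $N := \max(N_0(r), N_1)$ works uniformly in $G$. I do not expect a serious obstacle here; the only step requiring genuine care is verifying properness of $T$ in each family $\SL_n, \SU_n, \Sp_{2n}, \SO_{2n}^\pm, \SO_{2n+1}$ and in the exceptional types, which in each case reduces to observing that among the characters $\alpha_i\alpha_j^{-1}$ of a maximal torus, none is identically an $m$th root of unity.
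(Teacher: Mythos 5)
Your proposal is essentially the same as the paper's argument: excise the normal set $\{g : g^m \text{ not regular semisimple}\}$, bound its size by a Lang--Weil style dimension estimate, and then invoke the uniform bound on character values at regular semisimple elements from \cite[Theorem~3]{GLL}, absorbing small $q$ into the constant. Two small remarks for completeness. First, the paper establishes properness of the excluded locus more cleanly than your torus computation, by observing that the $m$th power map is dominant and the regular semisimple locus is open dense, so its preimage is open, non-empty, hence dense in the irreducible variety $\uG$; this handles non-semisimple $g$ at once (your torus argument shows the semisimple part of $T$ is proper, but when $p\mid m$ there can be unipotents $u\ne 1$ with $u^m=1$, so you'd need an extra word to rule those out). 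Second, the paper passes to the simply connected cover $\tilde G = \uGsc(\bar\F_q)^F$ before applying GLL, both to lift characters of $G$ and to include the Suzuki and Ree groups; you should keep that reduction in mind, though it does not change the shape of your argument.
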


\begin{proof}
Note that this case includes the Suzuki and Ree groups, so we express $G$ as a quotient $\tilde G/Z(\tilde G)$,
where $\tilde G := \uG(\bar\F_q)^F$,
$\uG$ is a a simply connected  simple algebraic group, $F$ is a Frobenius map, and $Z$ denotes center.
Fixing the rank of  $\uG$ gives an upper bound for the order of the Weyl group of 
$\uG$ and the number of simple roots of $\uG$.  
For every regular semisimple element
$a\in G$, we choose a regular semisimple  element $\tilde a$ of  $\tilde G$ which lies over $a$.
To bound $|\chi(a)|$ for every irrreducible character of $G$, it suffices to bound $|\chi(\tilde a)|$ for every irreducible character of $\tilde G$.  Letting $\tilde A$ denote the centralizer of $\tilde a$ in $\tilde G$, we can deduce this by applying \cite[Theorem 3]{GLL} to the pair $(\tilde G, \tilde A)$.

If $\tilde G_m$ denotes the set of $x\in \tilde G$ such that $x^m$ fails to be regular semisimple, then
$\tilde G_m$ consists of the $F$-fixed points of a proper Zariski-closed subset of $\uG(\bar\F_q)$
since the $m$th power map is dominant and the regular semisimple locus is open and dense in $\uG$.
By the argument of \cite[Proposition~3.4]{LaSh2},
$$|\tilde  G_m| = O(|G|^{1-1/\dim \uG}).$$
By choosing $N$ sufficiently large, we may take $|G|$ as large as we wish and thereby obtain the proposition.

\end{proof}

We can now prove Theorem~\ref{main-precise}.
\begin{proof}
As this is an asymptotic statement, it suffices to consider only alternating groups and groups of Lie type.
For alternating groups, Theorem~\ref{main-precise} is a special case of \cite[Theorem~1.18]{LaSh1}. 

For any two conjugacy classes $C_1$ and $C_2$ of $G$ (possibly equal), we denote by $\mu_{C_1\times C_2}$ 
the uniform distribution on the set $C_1\times C_2\subset G^2$.  We can write
$$\mu_{G\times G} = \sum_{C_1\subset G}\sum_{C_2\subset G} \frac{|C_1\Vert C_2|}{|G|^2} \mu_{C_1\times C_2},$$
where the sums are taken over conjugacy classes.  Thus,
$$\Vert \mu_G - f_*\mu_{G\times G}\Vert_1 
  \le \sum _{C_1\subset G}\sum_{C_2\subset G} \frac{|C_1\Vert C_2|}{|G|^2} \Vert \mu_G - f_*\mu_{C_1\times C_2}\Vert_1.$$
If $S$ is a normal subset of $G$ such that $\frac {|S|}{|G|}  \le \frac \epsilon 5$ and 
$$\Vert \mu_G - f_*\mu_{C_1\times C_2}\Vert_1 \le \frac \epsilon 5$$
for all conjugacy classes $C_1,C_2\subset G\setminus S$, then 
\begin{align*}
\Vert \mu_G - f_*\mu_{G\times G}\Vert_1 
&\le \sum _{C_1\subset G\setminus S}\sum_{C_2\subset G\setminus S} \frac{|C_1\Vert C_2|}{|G|^2} \Vert \mu_G - f_*\mu_{C_1\times C_2}\Vert_1 \\
&\qquad\qquad  + \sum _{C_1\subset S}\sum_{C_2\subset G} \frac{|C_1\Vert C_2|}{|G|^2} \Vert \mu_G - f_*\mu_{C_1\times C_2}\Vert_1 \\
&\qquad\qquad  + \sum _{C_1\subset G}\sum_{C_2\subset S} \frac{|C_1\Vert C_2|}{|G|^2} \Vert \mu_G - f_*\mu_{C_1\times C_2}\Vert_1 \\
&\le \frac \epsilon5 + \frac{2\epsilon}5 + \frac{2\epsilon}5 = \epsilon.
\end{align*}
It therefore suffices to prove that as $|G|\to \infty$, we can choose a normal subset $S$ such that $\frac{|S|}{|G|}$
and   
\begin{equation}
\label{classes}
\max_{C_1,C_2\subset G\setminus S} \Vert \mu_G - f_* \mu_{C_1\times C_2}\Vert_1
\end{equation}
both tend to zero.   

If $c_1\in C_1$, $c_2\in C_2$, and $g\in G$, the probability that uniformly chosen
conjugates $x_1$ and $x_2$ of $c_1$ and $c_2$ respectively satisfy $x_1^{m_1} x_2^{m_2} = g$ is
$$\frac 1{|G|} \sum_\chi \frac {\chi(c_1^{m_1})\chi(c_2^{m_2})\bar\chi(g)}
{\chi(1)},$$
where $\chi$ ranges over all irreducible characters of $G$.  We claim that for the desired $L^1$ estimate, it suffices to prove that
there exists a normal set $S'$ of $G$ of cardinality $o(|G|)$, such that for all $g\in G\setminus S'$ and all $c_1,c_2\in G\setminus S$,
$$ \sum_{\chi\neq 1} \frac {\chi(c_1^{m_1})\chi(c_2^{m_2})\bar\chi(g)}{\chi(1)} = o(1).$$
Indeed, if $\mu_1$ denotes the measure obtained on $G$ by restricting
$f_* \mu_{C_1\times C_2}$ to $G\setminus S'$ and extending by zero, and 
$$\mu_2 = f_* \mu_{C_1\times C_2} - \mu_1,$$
then the $\mu_i$ are non-negative measures, so 
$$\Vert \mu_1\Vert_1 + \Vert \mu_2\Vert_1 = \Vert f_*\mu_{C_1\times C_2}\Vert_1 = 1.$$
Moreover,
$$\Vert  \mu_G -\mu_1\Vert_1 \le \frac{|S'|}{|G|} 
	+ \max_{g\in G\setminus S'} \frac 1{|G|} \sum_\chi \frac {\chi(c_1^{m_1})\chi(c_2^{m_2})\bar\chi(g)}{\chi(1)}.$$
As
\begin{equation*}
\begin{split}
\Vert \mu_G - f_*\mu_{C_1\times C_2}\Vert_1
	& \le \Vert  \mu_G -\mu_1\Vert_1 + \Vert \mu_2\Vert_1
	= \Vert  \mu_G -\mu_1\Vert_1  + 1 - \Vert \mu_1\Vert_1 \\
	& \le \Vert  \mu_G -\mu_1\Vert_1  + 1 - (\Vert \mu_G\Vert_1 - \Vert \mu_G - \mu_1\Vert_1) \\
	&= 2\Vert  \mu_G -\mu_1\Vert_1,
\end{split}
\end{equation*}
the claim follows.

We consider first the case that the rank $r$ of $\uG$ is sufficiently large.  In particular, this means that $G$ is 
of linear, unitary, orthogonal, or symplectic type, which means that there exists a classical group $\uG$ over a finite
field $\F_q$ and a subgroup $\tilde G\subset \uG(\F_q)$ of index $\le 2$ such that $\tilde G$ is a central extension of $G$ of
degree $\le r$.  By Theorem~\ref{power-fibers}, there exists a normal subset $\tilde S$ of $\tilde G$ of cardinality $o(|\tilde G|)$ 
such that for every $g\in \tilde G\setminus \tilde S$ the fibers of the $m_1$th power and $m_2$th power maps containing $g$ have cardinality
$q^{O(\log^4 r)}$.  If $\uGsc$ is the simply connected covering group of $\uG$, then $\uGsc(\F_q)\to G$ is a universal central extension and
therefore factors through $\tilde G$.  In particular, every irreducible representation of $\tilde G$ can be regarded as an irreducible representation of $\uGsc(\F_q)$.
Applying Proposition~\ref{generic}, we conclude that the (normal) set of all elements $g\in \tilde G\setminus \tilde S$ such that 
$$\max(|\chi(g)|, |\chi(g^{m_1})|,|\chi(g^{m_2})|) > \chi(1)^{1/4}$$
for some irreducible character $\chi$ of $\tilde G$ has cardinality $o(|\tilde G|)$.  By, \cite[Theorem~1.2]{LiSh3}, if $r$ is sufficiently large,
$$\sum_{\chi\neq 1} \chi(1)^{-1/4} = o(1).$$
Defining $\tilde f$ to be the map $(x_1,x_2)\mapsto x_1^{m_1} x_2^{m_2}$ on $\tilde G$,
$$\Vert \mu_{\tilde G} - \tilde f_*\mu_{\tilde G\times\tilde G}\Vert_1 = o(1).$$
As the diagram
$$\xymatrix{\tilde G\times \tilde G \ar[r]^(.6){\tilde f}\ar[d]&\tilde G\ar[d] \\
			G\times G\ar[r]^(.6)f& G}$$
commutes, we deduce
$$\Vert \mu_G - f_*\mu_{G\times G}\Vert_1 = o(1).$$

Next we consider the case that the rank $r$ is bounded but greater than $1$.  This includes the cases of
Suzuki and Ree groups.  Let $\tilde G = \uGsc(\bar\F_q)^F$ where $F$ is a Frobenius map.  By  Proposition~\ref{bounded-rank}, there exists a normal subset $\tilde S\subset \tilde G$ such that for all $g\in \tilde G$
and every irreducible character $\chi$ of $g$, 
\begin{equation}
\label{absolute-bound}
\max(|\chi(g)|, |\chi(g^{m_1})|,|\chi(g^{m_2})|) = O(1).
\end{equation}
If $S$ denotes the image of $\tilde S$ in $G$, it follows that $|S| = o(|G|)$ (since the degree of the morphism $\uGsc\to \uG$ is bounded by rank), and that
(\ref{absolute-bound}) holds for $g\in G\setminus S$ and $\chi$ any character of $G$.
By Theorem~1.1 and Corollary~1.3 of \cite{LiSh3}, 
$$\sum_{\chi\neq 1} \frac 1{\chi(1)} = \z^G(1) - 1 = o(1)$$
for all finite $G$ not of rank $1$.  This implies the theorem.

Finally, we consider the case of groups of the form $G=\PSL_2(q)$.  It suffices to prove the theorem for $\SL_2(\F_q)$.
As there are $O(m)$ regular semisimple classes in $\SL_2(\F_q)$ whose $m $th power fails to be regular semisimple,
it suffices to prove that (\ref{classes}) tends to zero if $G\setminus S$ contains only regular semisimple elements.

By elementary algebra one  checks that $x\in \SL_2(\F_q)$ is regular semisimple if and only if $\tr(x)\not \in\{-2,2\}$
and that if $a_1,a_2,a_3\in \F_q\setminus \{-2,2\}$ such that 
$$a_1^2+a_2^2+a_3^2 \neq a_1 a_2 a_3 + 4,$$
the diagonal conjuation action of $\SL_2(\F_q)$ on
$$\{(x_1,x_2,x_3)\in \SL_2(\F_q)^3\colon x_1x_2x_3=e,\: \tr(x_i) = a_i\;\forall i\in\{1,2,3\}\}$$
is simply transitive.
In particular, the number of ways to write $x_3^{-1}$
as a product of a conjugate of $x_1$ and a conjugate of $x_2$ equals the order of the centralizer of $x_3$ in $\SL_2(\F_q)$,
which is either $q-1$ or $q+1$ depending on whether $x_3$ belongs to a split or a non-split torus of $\SL_2$ over $\F_q$.
>From this, we deduce that (\ref{classes}) tends to zero as long as $G\setminus S$ contains only regular semisimple elements.

\end{proof}

\section{Admissible words}

We conclude this paper by proving Proposition~\ref{admissible}.
Let $w \in F_d$ ($d \ge 1$) be a word. 
For a finite group $G$ let $N_w(g)$ be the number of solutions of
the equation $w(g_1, \ldots , g_d) = g$ where $g_i \in G$.
Then $N_w$ is a class function on $G$, hence it can be expressed
as $N_w = \sum_{\x \in \Irr G} N_w^{\x} \cdot \x$, where
$N_w^{\x} \in \C$ are the so called Fourier coefficients,
which have been studied by many authors.

Now let $w$ be as in Proposition~\ref{admissible}, namely $1 \ne w \in F_d$ 
is admissible in the variables $x_1, \ldots , x_d$. Without
loss of generality we may assume that $x_1, \ldots , x_d$ 
all occur in $w$.

By formula (1.5) of \cite{PS} we have
\[
N_w^{\x} = |G|^{d-1}/\x(1)^{d-r},
\]
where $r$ is a
certain integer depending on $w$ and satisfying 
$1 \le r \le d+1$. If $r = d+1$ then $N_w^{\x} = |G|^{d-1} \x(1)$
for all $\x \in \Irr G$, so $N_w = |G|^{d-1} \sum_{\x} \x(1)\x$.
This implies that the word map $w:G^d \go G$ is identically $1$
so $w = 1$ in $F_d$, a contradiction. It follows that $r \le d$. 

Next, by Theorem 4.2 in \cite{PS}, $r$ is not congruent to $d$ modulo
$2$. Setting $k = d-r$ we obtain $k \ge 1$.

Clearly $P_{w,G} = N_w/|G|^d$, and this yields 
\[
P_{w,G} = \frac1{|G|} \sum_{\x \in \Irr G} \x(1)^{-k} \cdot \x.
\] 

By Lemma 2.3 of \cite{GSh}, if 
$P = |G|^{-1}\sum_{\x \in \Irr G} a_{\x}\x$ and $a_1 = 1$, then
$\Vert P-\mu_G\Vert _1 \le (\sum_{1 \ne \x \in \Irr G} |a_{\x}|^2)^{1/2}$.
Applying this we obtain 
\[
\Vert P_{w,G} - \mu_G\Vert _1 
\le  \bigl(\sum_{1 \ne \x \in \Irr G} \x(1)^{-2k}\bigr)^{1/2} 
= (\z^G(2k)-1)^{1/2}. 
\]
By \cite{LiSh2} we have $\z^G(2k) \go 1$ as $|G| \go \infty$.
Therefore $\Vert P_{w,G} - \mu_G\Vert _1 \go 0$, and 
this completes the proof of Proposition~\ref{admissible}.

\end{document}